\newtheorem{theorem}{Theorem}
\newtheorem{lemma}{Lemma}
\title{\textbf{Moderate Deviation Principle for a Class of SPDEs} }
\date{}
\author{Parisa Fatheddin$^{a}$ and Jie Xiong$^{b,}$\thanks{Research supported partially by
FDCT 076/2012/A3.}\\
\footnotesize ${ }^{a}$Department of Mathematics, University of Alabama, Huntsville, Al 35899, USA.\\
\footnotesize ${ }^{b}$Department of Mathematics, FST, University of Macau, PO Box 3001, Macau, China
}
\begin{document}
\newtheorem{example}[theorem]{Example}
\newtheorem{cor}[theorem]{Corollary}
\newtheorem{notation}[theorem]{Notation}
\newtheorem{notations}[theorem]{Notations}
\newtheorem{claim}[theorem]{Claim}
\newtheorem{mtheorem}[theorem]{Meta-Theorem}
\newtheorem{prop}[theorem]{Proposition}
\newtheorem{rem}[theorem]{Remark}
\newtheorem{conj}[theorem]{Conjecture}
\newtheorem{rems}[theorem]{Remarks}
\maketitle
\begin{abstract}
We establish the moderate deviation principle for the solutions of a class of stochastic partial differential equations with non-Lipschitz continuous coefficients. As an application, we derive the moderate deviation principle for two important population models: super-Brownian motion and Fleming-Viot process.
\end{abstract}
\noindent {\sc Key words:} Moderate deviation principle, stochastic
partial differential
 equation, Fleming-Viot process, super-Brownian motion.

\noindent {\sc MSC 2010 subject classifications:} Primary 60F10;
Secondary: 60H15, 60J68.

\section{Introduction}
Many problems in the fields of applications can be modeled by measure-valued processes. Among them are two of the most commonly studied population models called super-Brownian motion (SBM) and Fleming-Viot process (FVP). These population models have been the focus of numerous recent publications. One of the interesting problems on the models is to set the branching rate for SBM and resampling rate for FVP to tend to zero and to study the rate at which the population's measure converges to a deterministic limit. This rate of convergence is best given by the large deviation principle (LDP). In \cite{me}, we achieved the LDP for SBM and FVP as the above mentioned rates go to zero and obtained an explicit form of the rate of convergence for each model. However, the topology introduced there is not the natural one. Namely, we used the double quotient space due to the non-uniqueness of the controled PDE in the definition of the rate function.  Here we achieve the moderate deviation principle (MDP), which provides the convergence rate of the models as the branching/resampling rate tend to zero at a speed slower than that considered for the LDP. The topology we shall use is the standard one, and there is no need to introduce the quotient space.

MDP for SBM has also been established by Schied in \cite{Schied}. He used space $\mathcal{C}\left([0,1]; M\left(\mathbb{R}^{d}\right)\right)$ equipped with compact open topology, where $M(\mathbb{R}^{d})$ is the space of finite signed measures on $\mathbb{R}^{d}$ with the coarsest topology in which $\mu \mapsto \left<\mu,f\right>$ are continuous for every bounded Lipschitz function on $\mathbb{R}^{d}$. The main tool he applied is the G$\ddot{a}$rtner-Ellis theorem (cf. Theorem 4.6.1 of \cite{Dem}). Here we have used a similar space and have obtained the same result; however, with a different approach. Other authors including those of \cite{HongMDP, Hong, Yang, Zhang} have investigated the MDP for processes related to SBM. These processes include SBM with super-Brownian immigration (SBMSBI) and SBM with immigration governed by Lebesgue measure. Here we have also derived the MDP for FVP, which to our knowledge, has not yet been shown in the literature.

In this article we study the SBM and FVP based on their characterization by solutions to certain SPDEs. We formulate a general class of SPDEs by observing the similarities between the two SPDEs and in Section 3 derive the MDP for this class by applying Theorem 6 of \cite{Bud}. In Section 4, we then establish the MDP for the two population models with the help of the contraction principle (cf. \cite{Dup} Theorem 4.2.1). We note that since the formulation of SBM and FVP by SPDEs offered by \cite{Xio} was given only for dimension one then our result on the MDP is limited to this dimension. For higher dimensions further investigation is required.

\section{Notations and Main Results}
Suppose $(\Omega, \mathcal{F}, P)$ is a probability space and $\{\mathcal{F}_{t}\}$ is a family of non-decreasing right continuous sub-$\sigma$-fields of $\mathcal{F}$ such that $\mathcal{F}_{0}$ contains all $P$-null subsets of $\Omega$. We denote $\mathcal{C}_{b}(\mathbb{R})$ to be the space of continuous bounded functions on $\mathbb{R}$ and $\mathcal{C}_{c}(\mathbb{R})$ be the set of continuous functions in $\mathbb{R}$ with compact support. In addition, for $0<\beta \in \mathbb{R}$, we let $\mathcal{M}_{\beta}(\mathbb{R})$ denote the set of $\sigma$-finite measures $\mu$ on $\mathbb{R}$ such that
\begin{equation}\label{Mbeta}
\int e^{-\beta |x|} d\mu(x) <\infty.
\end{equation}
 We endow this space with the topology defined by a modification of the usual weak topology: $\mu^{n}\rightarrow \mu$ in $\mathcal{M}_{\beta}(\mathbb{R})$ iff for every $f\in \mathcal{C}_{b}(\mathbb{R})$,
\begin{equation*}
\int_{\mathbb{R}}f(x)e^{-\beta |x|} \mu^{n}(dx) \rightarrow \int_{\mathbb{R}}f(x)e^{-\beta |x|}\mu(dx).
\end{equation*}
This topology is given by the following modified Wasserstein distance,
\begin{eqnarray*}
 &&\rho_{\beta}(\mu,\nu)\\
 &:=& \inf\left\{\left|\int_{\mathbb{R}} f(x)e^{-\beta|x|}\left(\mu(dx)-\nu(dx)\right)\right|:
 f\in \mathcal{C}_{b}^{1}(\mathbb{R}), \|f\|_{\infty}\vee \|f'\|_{\infty}\leq 1\right\}.
 \end{eqnarray*}
 We denote the probability measures on $\mathbb{R}$ with the above topology by $\mathcal{P}_{\beta}(\mathbb{R})$. Let $(S, \mathcal{S})$ be the measurable space defined as
\begin{equation}
(S, \mathcal{S}):= (\mathcal{C}([0,1];\mathbb{R}^{\infty}), \mathbb{B}(\mathcal{C}([0,1];\mathbb{R}^{\infty}))),
\end{equation}
where $\mathbb{R}^{\infty}$ is the Polish space with the metric given as
\begin{equation*}
d(\{x_{i}\},\{y_{i}\}):= \sum_{i=1}^{\infty} 2^{-i}(|x_{i}-y_{i}|\wedge 1).
\end{equation*}

Throughout this paper, we assume $\beta_{0}\in(0,\beta)$ and $K$ is a constant which may take different values in different lines. Also notation $\Delta$ stands for the second derivative in the spatial variable $x$. This notation will be used when both spatial and time variables are involved, or when the dual operator will be needed. Otherwise, we will use the simpler notation $f''$. Same convention is used for $\nabla$, the first derivative in spatial variable. For $\alpha \in (0,1)$, we consider the space $\mathbb{B}_{\alpha, \beta}$ composed of all functions $f:\mathbb{R}\rightarrow \mathbb{R}$ such that for every $m\in \mathbb{N}$, there exist constants $K>0$ with the following conditions:
\begin{eqnarray}
\left|f(y_{1})-f(y_{2})\right|&\leq& Ke^{\beta m}|y_{1}-y_{2}|^{\alpha} , \hspace{.4cm} \forall |y_{1}|,|y_{2}| \leq m\\ \label{cond1}
|f(y)| &\leq& Ke^{\beta |y|}, \hspace{.4cm} \forall y\in \mathbb{R}\label{cond2}
\end{eqnarray}
and with the metric,
\begin{equation*}
d_{\alpha,\beta}(u,v)=\sum^\infty_{m=1}2^{-m}(\|u-v\|_{m,\alpha,\beta}\wedge1),\qquad u,\;v\in\mathbb{B}_{\alpha,\beta}
\end{equation*}
 where
 \begin{equation*}
\|u\|_{m,\alpha,\beta}=\sup_{x\in\mathbb{R}}e^{-\beta|x|}|u(x)|+\sup_{y_{1}\neq y_{2}\\ \left|y_{1}\right|,\left|y_{2}\right|\leq m}\frac{|u(y_{1})-u(y_{2})|}{|y_{1}-y_{2}|^\alpha}e^{-\beta m}.
\end{equation*}
Note that the collection of continuous functions on $\mathbb{R}$ satisfying (\ref{cond2}), referred to as $\mathbb{B}_{\beta}$, is a Banach space with norm,
\begin{equation*}
\|f\|_{\beta}= \sup_{x\in \mathbb{R}} e^{-\beta |x|}|f(x)|.
\end{equation*}

For the convenience of the reader, we now offer a quick introduction to the two population models considered. In SBM model, each individual has an exponentially distributed lifetime and the population evolves as a ``cloud.'' It is studied by taking a scaled limit of a branching process with an associated branching rate. SBM with branching rate $\epsilon$, denoted by $\mu_{t}^{\epsilon}$, is a measure-valued Markov process that can be characterized by one of the following.

$i)$ $(\mu_{t}^{\epsilon})$ having Laplace transform,
\begin{equation*}
\mathbb{E}_{\mu_{0}^{\epsilon}} \exp(-\left<\mu_{t}^{\epsilon},f\right>)= \exp(-\left<\mu_{0}^{\epsilon}, v(t,\cdot)\right>),
\end{equation*}
where $v(\cdot, \cdot)$ is the unique mild solution of the evolution equation:
\begin{equation*}
\left\{\begin{array}{l} \dot{v}(t,x)= \frac{1}{2}\Delta v(t,x) - v^{2}(t,x),  \\
v(0,x)=f(x),
\end{array}\right.
\end{equation*}
for $f\in \mathcal{C}_{p}^{+}(\mathbb{R}^{d})$ where for $K>0$,
\begin{equation*}
\mathcal{C}_{p}(\mathbb{R}^{d}):= \left\{f\in \mathcal{C}(\mathbb{R}^{d}): |f(x)|< K\phi_{p}(x)\mbox{ for } p>d, \phi_{p}(x):= (1+|x|^{2})^{-\frac{p}{2}}\right\}.
\end{equation*}.

$ii)$ $(\mu_{t}^{\epsilon})$ as the unique solution to a martingale problem given as: for all $f\in \mathcal{C}_{b}^{2}(\mathbb{R})$
\begin{equation*}
M_{t}(f):= \left<\mu_{t}^{\epsilon},f\right>-\left<\mu_{0}^{\epsilon},f\right>-\int_{0}^{t}\left<\mu_{s}^{\epsilon},\frac{1}{2}\Delta f\right> ds,
\end{equation*}
is a square-integrable martingale with quadratic variation,
\begin{equation*}
\left<M(f)\right>_{t}= \epsilon \int_{0}^{t} \left<\mu_{s}^{\epsilon},f^{2}\right>ds.
\end{equation*}

$iii)$ In \cite{Xio} SBM was studied by its ``distribution'' function-valued process $u_{t}^{\epsilon}$ defined as
\begin{equation}\label{SBM d}
u_{t}^{\epsilon}(y)=\int_{0}^{y}\mu_{t}^{\epsilon}(dx), \hspace{1cm} \forall y\in \mathbb{R}
\end{equation}
and using (\ref{SBM d}), SBM was characterized by the following stochastic partial differential equation (SPDE),
\begin{equation}\label{SBM}
u_{t}^{\epsilon}(y)=F(y)+ \int_{0}^{t}\int_{0}^{u_{s}^{\epsilon}(y)} W(dsda) + \int_{0}^{t} \frac{1}{2} \Delta u_{s}^{\epsilon}(y)ds,
\end{equation}
where $F(y)=\int_{0}^{y}\mu_{0}(dx)$ is the ``distribution'' function of $\mu_{0}$, $W$ is an $\mathcal{F}_{t}$-adapted space-time white noise random measure on $\mathbb{R}^{+}\times \mathbb{R}$ with intensity measure $dsda$.

On the other hand, FVP is a population model with its evolution based on the genetic types of the individuals. It is a probability measure-valued diffusion process studied as a scaled limit of a step-wise mutation model, in which the population size is assumed to stay constant throughout time and individuals move in $\mathbb{Z}^{d}$ according to a continuous time simple random walk. As in the case for SBM, this population model denoted as $(\mu_{t}^{\epsilon})$, is a Markov process and can be characterized by one of the following.

$i)$ $(\mu_{t}^{\epsilon})$ a family of probability measure-valued Markov process generated by $\mathcal{L}^{\epsilon}$ defined as
\begin{eqnarray*}
\mathcal{L}^{\epsilon}F(\mu_{t}^{\epsilon})&=& f'(\left<\mu_{t}^{\epsilon},\phi\right>)\left<\mu_{t}^{\epsilon},A\phi\right> \\
&&\hspace{.4cm}+ \frac{\epsilon}{2}\int\int f''(\left<\mu_{t}^{\epsilon},\phi\right>)\phi(x)\phi(y)Q(\mu_{t};dx,dy),
\end{eqnarray*}
for $\epsilon>0$ where
\begin{equation*}
Q(\mu_{t}^{\epsilon};dx,dy):= \mu_{t}^{\epsilon}(dx)\delta_{x}(dy)-\mu_{t}^{\epsilon}(dx)\mu_{t}^{\epsilon}(dy),
\end{equation*}
with $\delta_{x}$ denoting the Dirac measure at $x$ and $A$ being the generator of a Feller process. The operator $\mathcal{L}^{\epsilon}$ is given on the set,
\begin{equation*}
\mathcal{D}= \{F(\mu_{t}^{\epsilon})=f(\left<\mu_{t}^{\epsilon},\phi\right>):f\in \mathcal{C}_{b}^{2}(\mathbb{R}), \phi \in \mathcal{C}(\mathbb{R})\}.
\end{equation*}
(see \cite{DF1} and \cite{FenX} for this formulation). \\

$ii)$ $(\mu_{t}^{\epsilon})$ as a unique solution to the following martingale problem: for $f\in \mathcal{C}_{c}^{2}(\mathbb{R})$,
\begin{equation*}
M_{t}(f)= \left<\mu_{t}^{\epsilon},f\right>-\left<\mu_{0}^{\epsilon},f\right>-\int_{0}^{t} \left<\mu_{s}^{\epsilon},\frac{1}{2}\Delta f\right>ds,
\end{equation*}
is a continuous square-integrable martingale with quadratic variation,
\begin{equation*}
\left<M_{t}(f)\right>= \epsilon \int_{0}^{t}\left(\left<\mu_{s}^{\epsilon},f^{2}\right>-\left<\mu_{s}^{\epsilon},f\right>^{2}\right)ds.
\end{equation*}

$iii)$ An alternative formulation of FVP was also made in \cite{Xio}. There by using
\begin{equation}
u_{t}^{\epsilon}(y)= \mu_{t}^{\epsilon}((-\infty,y]),
\end{equation}
FVP was proved to be given by the solution to the following SPDE,
\begin{equation}\label{FVP}
u_{t}^{\epsilon}(y)= F(y) + \int_{0}^{t}\int_{0}^{1} \left(1_{a\leq u_{s}^{\epsilon}(y)}-u_{s}^{\epsilon}(y)\right)W(dsda) + \int_{0}^{t} \frac{1}{2}\Delta u_{s}^{\epsilon}(y)ds.
\end{equation}

Based on the context, $\epsilon >0$ represents the branching rate for SBM and resampling rate for FVP. Note that the main difference between (\ref{SBM}) and (\ref{FVP}) is in the second term; therefore, in \cite{Xio} a general SPDE with small noise term of the form
\begin{equation}\label{SPDE}
u_{t}^{\epsilon}(y)=F(y) + \sqrt{\epsilon} \int_{0}^{t}\int_{U}G(a,y,u_{s}^{\epsilon}(y)) W(dsda) + \int_{0}^{t} \frac{1}{2}\Delta u_{s}^{\epsilon}(y)ds,
\end{equation}
was considered with conditions,
\begin{eqnarray}
\int_{U}\left|G(a,y,u_{1})-G(a,y,u_{2})\right|^{2} \lambda(da) &\leq& K|u_{1}-u_{2}|, \label{con1}\\
\int_{U}|G(a,y,u)|^{2}\lambda(da) &\leq& K(1+|u|^{2}),\label{con2}
\end{eqnarray}
where $u_{1},u_{2},u,y \in \mathbb{R}$, $F$ is a function on $\mathbb{R}$ and $G:U\times \mathbb{R}^{2} \rightarrow \mathbb{R}$. Here we prove the MDP for $\{u_{t}^{\epsilon}\}$ by considering the LDP for $\{v_{t}^{\epsilon}\}$ given by,
\begin{equation}\label{centered}
v_{t}^{\epsilon}(y):= \frac{a(\epsilon)}{\sqrt{\epsilon}} (u_{t}^{\epsilon}(y)-u_{t}^{0}(y)).
\end{equation}
Hence we have,
 \begin{equation}\label{MDP SPDE}
v_{t}^{\epsilon}(y)= a(\epsilon) \int_{0}^{t}\int_{U} G_{s}^{\epsilon}(a,y,v_{s}^{\epsilon}(y))W(dsda) + \frac{1}{2}\int_{0}^{t} \Delta v_{s}^{\epsilon}(y)ds,
\end{equation}
where $G_{s}^{\epsilon}(a,y,v):= G(a,y,\frac{\sqrt{\epsilon}}{a(\epsilon)}v+u_{s}^{0}(y))$ and $a(\epsilon)$ satisfies $0\leq a(\epsilon)\rightarrow 0$ and $\frac{a(\epsilon)}{\sqrt{\epsilon}} \rightarrow \infty$ as $\epsilon \rightarrow 0$. To form the controlled PDE of (\ref{MDP SPDE}) we replace the noise by $h\in L^{2}([0,1]\times U, ds\lambda(da))$ and obtain
\begin{equation}\label{controlled}
v_{t}(y)=\int_{0}^{t}\int_{U}G(a,y,u_{s}^{0}(y))h(s,a)\lambda(da)ds + \frac{1}{2}\int_{0}^{t}\Delta v_{s}(y)ds.
\end{equation}
Note that for every $h\in L^{2}([0,1]\times U, ds\lambda(da))$, SPDE (\ref{controlled}) has a unique solution, which we denote as $\gamma(h)$ for a map $\gamma:L^{2}([0,1]\times U, ds\lambda(da))\rightarrow \mathcal{C}([0,1];\mathbb{B}_{\beta})$. We are now ready to state the first result of this paper.

\begin{theorem}\label{them1}
If $F\in \mathbb{B}_{\alpha,\beta_{0}}$ for $\alpha \in \left(0,\frac{1}{2}\right)$ then family $\{v^{\epsilon}_{.}\}$ given by (\ref{MDP SPDE}) satisfies the LDP in $\mathcal{C}([0,1];\mathbb{B}_{\beta})$ with rate function,
\begin{equation}\label{rate}
I(v)= \frac{1}{2}\inf\left\{\int_{0}^{1}\int_{U}\left|h_{s}(a)\right|^{2}\lambda(da)ds: v = \gamma(h)\right\},
\end{equation}
which implies that family $\{u_{t}^{\epsilon}\}$ obeys the MDP.
\end{theorem}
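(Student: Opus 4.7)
The plan is to apply the Budhiraja--Dupuis weak convergence method (Theorem 6 of \cite{Bud}): the LDP for $\{v^\epsilon\}$ at speed $a(\epsilon)^{-2}$ reduces to verifying (C1) a compactness condition for the skeleton $\gamma$, and (C2) a weak convergence condition for the Girsanov-shifted controlled equations. Because $v^\epsilon = \tfrac{a(\epsilon)}{\sqrt{\epsilon}}(u^\epsilon - u^0)$ by (\ref{centered}) is a continuous affine map, the MDP for $\{u_t^\epsilon\}$ then follows at once from the LDP for $\{v^\epsilon\}$ via the contraction principle.

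For (C1), I would work with the mild formulation of (\ref{controlled}),
\[
\gamma(h)_t(y)=\int_0^t\!\!\int_U\!\!\int_{\mathbb{R}} p_{t-s}(y-z)\,G(a,z,u_s^0(z))\,h(s,a)\,dz\,\lambda(da)\,ds,
\]
where $p_t$ is the one-dimensional heat kernel. The right-hand side is linear in $h$, so Cauchy--Schwarz in $(s,a)$ combined with the growth bound (\ref{con2}) yields $\sup_{\|h\|_{L^2}\le N}\|\gamma(h)_t\|_\beta \le C_N$ and a H\"older-in-$t$ modulus uniform in $h$. Continuity of $\gamma$ from the weak $L^2$-topology on balls of radius $N$ into $\mathcal{C}([0,1];\mathbb{B}_\beta)$ then follows because, for each $(t,y)$, $h\mapsto\gamma(h)_t(y)$ is a bounded linear functional on $L^2([0,1]\times U)$; an Arzel\`a--Ascoli argument in $\mathcal{C}([0,1];\mathbb{B}_\beta)$ gives compactness of $K_N=\{\gamma(h):\|h\|_{L^2}\le N\}$.

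For (C2), the Girsanov shift of (\ref{MDP SPDE}) by a predictable control $h^\epsilon$ with $\|h^\epsilon\|_{L^2}\le N$ a.s.\ reads
\begin{align*}
\tilde v_t^\epsilon(y) &= a(\epsilon)\int_0^t\!\!\int_U G_s^\epsilon(a,y,\tilde v_s^\epsilon(y))\,W(ds\,da)\\
&\quad + \int_0^t\!\!\int_U G_s^\epsilon(a,y,\tilde v_s^\epsilon(y))\,h^\epsilon(s,a)\,\lambda(da)\,ds+\tfrac{1}{2}\int_0^t\Delta\tilde v_s^\epsilon(y)\,ds.
\end{align*}
I would first derive uniform moment estimates $\sup_\epsilon \mathbb{E}\sup_{t\le 1}\|\tilde v_t^\epsilon\|_\beta^p<\infty$ for some $p\ge 2$, using Burkholder--Davis--Gundy on the stochastic term (whose prefactor $a(\epsilon)$ sends that contribution to $0$ in $L^2$), the a priori $L^2$-bound on $h^\epsilon$, and Gronwall applied against (\ref{con2}); together with a small-time modulus estimate these yield tightness in $\mathcal{C}([0,1];\mathbb{B}_\beta)$. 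To identify the limit, note that $\tfrac{\sqrt{\epsilon}}{a(\epsilon)}\to 0$ forces
\[
G_s^\epsilon(a,y,\tilde v_s^\epsilon(y))=G\!\left(a,y,\tfrac{\sqrt{\epsilon}}{a(\epsilon)}\tilde v_s^\epsilon(y)+u_s^0(y)\right)\longrightarrow G(a,y,u_s^0(y)),
\]
with rate quantified by the H\"older estimate (\ref{con1}). Passing to the limit in the drift against the weakly convergent $h^\epsilon$ and using uniqueness of the linear skeleton (\ref{controlled}) identifies any subsequential limit with $\gamma(h)$, completing (C2).

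\emph{Main obstacle.} The coefficient $G(a,y,\cdot)$ is only H\"older---not Lipschitz---in its third variable under (\ref{con1}), so direct Gronwall-type contraction arguments on $\tilde v^\epsilon$ are unavailable. What rescues the scheme is precisely the recentering: in $G_s^\epsilon$ the $\tilde v^\epsilon$-dependence enters only through the vanishing factor $\tfrac{\sqrt{\epsilon}}{a(\epsilon)}$, so the nonlinearity linearizes in the limit and (\ref{controlled}) is linear in $v$, which restores both uniqueness of the skeleton and identifiability of the limit law. A secondary technical point is the tuning of the exponential weights by choosing $\beta_0<\beta$ so that the $e^{-\beta|x|}$ weight absorbs the $e^{\beta_0|x|}$-growth of $F$ and of $u_s^0$ uniformly on $[0,1]$, keeping all estimates in $\mathbb{B}_\beta$ finite.
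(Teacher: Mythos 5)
Your proposal follows essentially the same route as the paper: both invoke Theorem 6 of \cite{Bud} and verify its two conditions for the controlled/skeleton equations, using the mild (heat-kernel) formulation, BDG--Gronwall moment bounds and Kolmogorov-type tightness in $\mathcal{C}([0,1];\mathbb{B}_{\beta})$, together with the key observation that the vanishing factor $\sqrt{\epsilon}/a(\epsilon)$ linearizes the limit so that the skeleton (\ref{controlled}) is the linear PDE $\gamma(h)$. The only cosmetic difference is that the paper packages both conditions into a single parametrized controlled SPDE $v^{\theta,\epsilon}$ (taking $\theta=0$ and $\theta=a(\epsilon)$), whereas you treat the deterministic compactness condition and the stochastic controlled equation separately; the substance is the same.
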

Similar to \cite{FGK} we consider the Cameron-Martin space which is defined as follows. Let $\mathcal{M}^S_\beta(\mathbb{R})$ be the space of signed measures $\mu=\mu_+-\mu_-$ with $\mu_\pm\in\mathcal{M}_\beta(\mathbb{R})$. Let $\mathcal{D}$ be the Schwartz space of test functions with compact support in $\mathbb{R}$
and continuous derivatives of all orders. Denote the dual space of real distributions on
$\mathbb{R}$ by $\mathcal{D}^{*}$ then the Cameron-Martin space, $H$, is composed of $\omega \in \mathcal{C}([0,1];\mathcal{M}^S_\beta(\mathbb{R}))$ satisfying the conditions below.
\begin{enumerate}
\item  $\omega_{0}=0$,
\item the $\mathcal{D}^{*}$-valued map $t\mapsto \omega_{t}$ defined on [0,1] is absolutely continuous with
 respect to time. Let $\dot{\omega}$ and $\Delta^{*}\omega$ be its generalized derivative and Laplacian
  respectively,
\item for every $t\in [0,1]$, $\dot{\omega}_t - \frac{1}{2}\Delta^{*} \omega_{t} \in \mathcal{D}^{*}$ is
 absolutely continuous with respect to $\mu^0_{t}$
 with $\frac{d\left(\dot{\omega}_{t}-\frac{1}{2}\Delta^{*}\omega_{t}\right)}{d\mu_t^{0}}$
  being the (generalized) Radon Nikodym derivative,
\item $\frac{d\left(\dot{\omega}_{t}-\frac{1}{2}\Delta^{*}\omega_{t}\right)}{d\mu_t^{0}}$ is in $L^{2}([0,1] \times \mathbb{R},
ds\mu(dy))$.
\end{enumerate}

Let $\tilde{H}$ be the space for which
conditions for $H$ hold with $\mathcal{M}^S_\beta(\mathbb{R})$
replaced by the space of  measures $\mathcal{P}^S_\beta(\mathbb{R})$, and with the additional assumption,
\begin{equation*}
\left<\mu_{t}^{0}, \frac{ d \left(\dot{\omega}_{t}-\frac{1}{2}\Delta^{*}\omega_{t}\right)}{d\mu_t^{0}}\right> = 0,
\end{equation*}
where $\mathcal{P}^S_\beta(\mathbb{R})$ is the set of signed measures $\mu$ with $\mu_\pm\in\mathcal{P}_\beta(\mathbb{R})$ and $\mu_\pm(\mathbb{R})=1$.
Denoting $\omega_{t}^{\epsilon}(dy):= \frac{a(\epsilon)}{\sqrt{\epsilon}} \left(\mu_{t}^{\epsilon}(dy)- \mu_{t}^{0}(dy)\right)$ we have the following two theorems.

\begin{theorem}\label{them2}
If $\omega_{0} \in \mathcal{M}_{\beta}(\mathbb{R})$  such that $F\in \mathbb{B}_{\alpha, \beta_{0}}$ then super-Brownian motion, $\{\mu_{t}^{\epsilon}\}$, obeys the MDP in $\mathcal{C}([0,1];\mathcal{M}^S_{\beta}(\mathbb{R}))$ with rate function,
\begin{equation}\label{rate4sbm}
  I(\omega)=  \left\{\begin{array} {ll}  \frac{1}{2} \displaystyle \int_{0}^{1}
  \int_{\mathbb{R}}\left|\frac{d\left(\dot{\omega}_t -\frac{1}{2}\Delta^{*}\omega_{t}\right)}{d\mu_{t}^{0}}(y)\right|^2 \mu_{t}^{0}(dy) dt
   & \mbox{\emph{if }} \mu \in H \\
   \infty
  & \mbox{\emph{otherwise}.}     \end{array}   \right.
 \end{equation}
\end{theorem}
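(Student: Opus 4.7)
The plan is to transport the LDP of Theorem~\ref{them1} into a MDP for $\{\mu_t^\epsilon\}$ via the contraction principle (Theorem 4.2.1 of \cite{Dup}). By the defining relation (\ref{SBM d}) and (\ref{centered}) we have $v_t^\epsilon(y) = \int_0^y \omega_t^\epsilon(dx)$, so $\omega_t^\epsilon$ is the distributional $\partial_y$-derivative of $v_t^\epsilon$. Let $\Psi$ denote this differentiation map from $\mathcal{C}([0,1];\mathbb{B}_\beta)$ into $\mathcal{C}([0,1];\mathcal{M}^S_\beta(\mathbb{R}))$, so that $\omega^\epsilon = \Psi(v^\epsilon)$; the task is to verify continuity of $\Psi$, apply the contraction principle, and identify the resulting rate function with (\ref{rate4sbm}).

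For continuity I would argue that, given $f\in\mathcal{C}_b^1(\mathbb{R})$ with $\|f\|_\infty\vee\|f'\|_\infty\leq 1$, integration by parts (after mollifying $e^{-\beta|\cdot|}$ near the origin and passing to the limit) yields
\begin{equation*}
\int_{\mathbb{R}} f(y) e^{-\beta|y|}\,\omega_t(dy) = -\int_{\mathbb{R}} v_t(y)\,\partial_y\!\bigl(f(y) e^{-\beta|y|}\bigr)\,dy,
\end{equation*}
with $|\partial_y(fe^{-\beta|\cdot|})(y)|\leq (1+\beta)e^{-\beta|y|}$. Combined with the elementary bound $\sup_y e^{-(\beta-\beta_0)|y|}\cdot e^{-\beta_0|y|}<\infty$, this controls $\rho_\beta(\omega_t,\tilde\omega_t)$ by a constant times $\|v_t-\tilde v_t\|_\beta$ uniformly in $t\in[0,1]$, giving continuity of $\Psi$. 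The contraction principle then produces the LDP for $\omega^\epsilon$ with rate $J(\omega)=\inf\{I(v):\Psi(v)=\omega\}$, which is precisely the MDP for $\mu^\epsilon$.

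To identify $J$ with (\ref{rate4sbm}), specialize (\ref{controlled}) to SBM: $U=\mathbb{R}_+$, $\lambda(da)=da$, $G(a,y,u)=1_{a\leq u}$, so that
\begin{equation*}
\dot v_t(y) - \tfrac{1}{2}\Delta v_t(y) = \int_0^{u_t^0(y)} h(t,a)\,da.
\end{equation*}
Differentiating in $y$ and using $\partial_y u_t^0(y)\,dy = \mu_t^0(dy)$ produces the distributional identity
\begin{equation*}
\dot\omega_t - \tfrac{1}{2}\Delta^*\omega_t = h(t,u_t^0(\cdot))\,\mu_t^0,
\end{equation*}
whence $\frac{d(\dot\omega_t-\frac{1}{2}\Delta^*\omega_t)}{d\mu_t^0}(y) = h(t,u_t^0(y))$. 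The change of variables $a = u_t^0(y)$, with Jacobian exactly $\mu_t^0(dy)$, then gives
\begin{equation*}
\int_0^1\!\!\int_{\mathbb{R}}\!\left|\tfrac{d(\dot\omega_t-\frac{1}{2}\Delta^*\omega_t)}{d\mu_t^0}(y)\right|^2 \mu_t^0(dy)\,dt = \int_0^1\!\!\int_0^{\mu_t^0(\mathbb{R})}|h(t,a)|^2\,da\,dt.
\end{equation*}
Since the indicator $1_{a\leq u_t^0(y)}$ makes $h(t,a)$ irrelevant for $a\geq\mu_t^0(\mathbb{R})$, the infimum in (\ref{rate}) is attained by setting $h=0$ there; the two variational formulas agree whenever $\omega\in H$ and $J(\omega)=\infty$ otherwise.

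The principal obstacle I foresee is the technical interplay between the contraction principle and the fact that not every $v\in\mathcal{C}([0,1];\mathbb{B}_\beta)$ has a $\partial_y$-derivative lying in $\mathcal{M}_\beta^S(\mathbb{R})$. One must verify that $I(v)=\infty$ outside the effective domain of $\Psi$, so that the restriction of $\Psi$ is genuinely continuous on the sublevel sets of $I$; equivalently, any $v=\gamma(h)$ automatically has a signed-measure derivative in $\mathcal{M}^S_\beta(\mathbb{R})$ because the right-hand side of the controlled PDE is smooth in $y$. A related subtlety is making the identity $du_t^0(y)=\mu_t^0(dy)$ rigorous when $\mu_t^0$ is only assumed to be in $\mathcal{M}_\beta(\mathbb{R})$; the smoothing furnished by the heat equation governing $\mu_t^0$ provides sufficient regularity of $u_t^0$ for $t>0$ to justify the change of variables, along the lines of \cite{FGK}.
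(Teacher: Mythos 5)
Your overall route is the same as the paper's: push the LDP of Theorem \ref{them1} through the antiderivative/differentiation correspondence by the contraction principle, then identify the rate function by rewriting the controlled PDE (\ref{controlled}) in terms of $\omega$. Two steps, however, do not hold as written.

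First, the continuity estimate. From $v_t-\tilde v_t\in\mathbb{B}_\beta$ you only know $|v_t(y)-\tilde v_t(y)|\le \|v_t-\tilde v_t\|_\beta\, e^{\beta|y|}$, so after your integration by parts the bound you get is $(1+\beta)\,\|v_t-\tilde v_t\|_\beta\int_{\mathbb{R}} e^{\beta|y|}e^{-\beta|y|}\,dy$, which diverges; the quoted bound $\sup_y e^{-(\beta-\beta_0)|y|}e^{-\beta_0|y|}<\infty$ does not repair this, because the difference $v_t-\tilde v_t$ is controlled only in the exponent-$\beta$ norm (the LDP of Theorem \ref{them1} lives in $\mathcal{C}([0,1];\mathbb{B}_\beta)$), not in an exponent-$\beta_0$ norm. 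Moreover the integration by parts produces boundary terms $f(y)e^{-\beta|y|}(v_t-\tilde v_t)(y)$ at $\pm\infty$ which need not vanish for elements of $\mathbb{B}_\beta$. Hence no Lipschitz bound $\rho_\beta(\omega_t,\tilde\omega_t)\le K\|v_t-\tilde v_t\|_\beta$ follows. The paper handles this point differently: it invokes the analogue of Lemma 6 of \cite{me}, namely that $\xi(u)(B)=\int 1_B(y)\,du(y)$ is continuous from $\mathbb{B}_\beta\cap\mathcal{A}$ ($\mathcal{A}$ the finite-variation functions) to $\mathcal{M}^S_\beta(\mathbb{R})$, so that $\eta(v)_t=\xi(v_t)$ is continuous; the domain issue you flag at the end as an ``obstacle'' is exactly what this restriction resolves, and your proposal leaves it open rather than closing it.

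Second, the SBM specialization. Because $\mathcal{M}_\beta(\mathbb{R})$ contains measures of infinite total mass, the paper uses $u_t^{\epsilon}(y)=\int_0^y\mu_t^{\epsilon}(dx)$ as in (\ref{SBM d}), which is negative for $y<0$; the correct data for (\ref{SBM}) are $U=\mathbb{R}$, $\lambda(da)=da$, $G(a,y,u)=1_{0\le a\le u}+1_{u\le a\le 0}$, not $U=\mathbb{R}_+$ with $G=1_{a\le u}$ (your $G$ would annihilate the noise for $y<0$, and the remark that $h(t,a)$ is irrelevant for $a\ge\mu_t^0(\mathbb{R})$ is vacuous when $\mu_t^0(\mathbb{R})=\infty$). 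Carrying out the computation with the two-sided $G$, as the paper does, gives $h_t(u_t^0(y))\,\mathrm{sgn}(y)=\frac{d(\dot\omega_t-\frac{1}{2}\Delta^{*}\omega_t)}{d\mu_t^0}(y)$; the extra $\mathrm{sgn}(y)$ disappears upon squaring, so your final expression (\ref{rate4sbm}) is unaffected, but the identification as you wrote it does not match the SPDE being analyzed. With these two repairs (continuity via the finite-variation lemma of \cite{me}, and the two-sided kernel with the resulting sign factor), your argument coincides with the paper's, including the concluding check that the variational formula (\ref{rate}) and the right side of (\ref{rate4sbm}) agree on both sets where either is finite.
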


\begin{theorem}\label{them3}
Suppose $\omega_{0} \in \mathcal{P}_{\beta}(\mathbb{R})$ such that $F
\in \mathbb{B}_{\alpha,\beta_{0}}$. Then, Fleming-Viot process, $\{\mu^{\epsilon}\}$,
satisfies the MDP on $\mathcal{C}([0,1];
\mathcal{P}^S_{\beta}(\mathbb{R}))$ with rate function,
\begin{equation}\label{rate4fvp}
  I(\omega)=  \left\{\begin{array} {ll}  \frac{1}{2} \displaystyle \int_{0}^{1}
  \int_{\mathbb{R}}\left|\frac{d \left(\dot{\omega_{t}}-\frac{1}{2}\Delta^{*}\omega_{t}\right)}{d\mu_{t}^{0}}
(y)  \right|^2 \mu_{t}^{0}(dy) dt
   & \mbox{\emph{if }} \mu \in \tilde{H} \\
   \infty
  & \mbox{\emph{otherwise.}}     \end{array}   \right.
 \end{equation}
 \end{theorem}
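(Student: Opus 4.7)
The plan is to derive Theorem \ref{them3} from Theorem \ref{them1} by the contraction principle, specialised to the FVP coefficient $G(a,y,u) = 1_{a \leq u} - u$ on $U = [0,1]$ with $\lambda$ equal to Lebesgue measure. First I would verify that this $G$ satisfies (\ref{con1})--(\ref{con2}) (using $\int_0^1 (1_{a\leq u_1}-1_{a\leq u_2})^2\, da = |u_1-u_2|$), so that Theorem \ref{them1} applies and yields the LDP for the centred ``distribution function'' process $v_t^\epsilon$ in $\mathcal{C}([0,1];\mathbb{B}_\beta)$ with rate
\[
I_v(v) = \frac{1}{2} \inf \left\{ \int_0^1\!\!\int_0^1 |h(s,a)|^2\, da\, ds : v = \gamma(h) \right\}.
\]
Since $\omega_t^\epsilon$ is, by construction, the signed measure whose cumulative ``distribution'' function equals $v_t^\epsilon$, the natural map $\Phi : v \mapsto \omega$ defined by $\omega_t((-\infty,y]) := v_t(y)$ sends $\{v^\epsilon\}$ to $\{\omega^\epsilon\}$, and $\Phi$ takes values in $\mathcal{C}([0,1];\mathcal{P}^S_\beta(\mathbb{R}))$ thanks to the total-mass cancellation $\omega_t(\mathbb{R}) = 0$ inherited from $\mu_t^\epsilon(\mathbb{R}) = \mu_t^0(\mathbb{R}) = 1$.

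Next I would show that $\Phi$ is continuous in the relevant topologies. For fixed $t$, pairing $\omega_t$ against a test function $f$ with $\|f\|_\infty \vee \|f'\|_\infty \leq 1$ and integrating by parts gives
\[
\int_\mathbb{R} f(y)\, e^{-\beta|y|}\, \omega_t(dy) = -\int_\mathbb{R} v_t(y)\, \frac{d}{dy}\bigl(f(y)e^{-\beta|y|}\bigr)\, dy,
\]
which is controlled by $\|v_t\|_\beta$ times a finite integral, so $\rho_\beta(\omega_t,\omega'_t) \leq C_\beta \|v_t - v'_t\|_\beta$. With continuity in hand, the contraction principle (Theorem 4.2.1 of \cite{Dup}) transfers the LDP from $v^\epsilon$ to $\omega^\epsilon$ with rate $I_\omega(\omega) = \inf\{I_v(v) : \Phi(v) = \omega\}$.

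It remains to recast this infimum as (\ref{rate4fvp}). Differentiating (\ref{controlled}) in $t$ and then in $y$ (as distributions) gives
\[
\dot{\omega}_t - \frac{1}{2} \Delta^* \omega_t = \bigl[h(t, u_t^0(y)) - \bar{h}(t)\bigr]\, \mu_t^0(dy), \qquad \bar{h}(t) := \int_0^1 h(t,a)\, da,
\]
so the Radon--Nikodym derivative is $\phi(t,y) = h(t, u_t^0(y)) - \bar{h}(t)$. The key observation is that $\gamma(h)$ is invariant under shifts $h(s,\cdot) \mapsto h(s,\cdot) + c(s)$, since $\int_0^1 (1_{a\leq u_s^0(y)} - u_s^0(y))\, da = 0$; hence the infimum in $I_\omega$ may be restricted to $h$ with $\bar{h} \equiv 0$. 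For such $h$, the change of variables $a = u_t^0(y)$ (pushing $\mu_t^0$ forward to Lebesgue measure on $[0,1]$) yields $\int_0^1 h^2(t,a)\, da = \int_\mathbb{R} \phi^2(t,y)\, \mu_t^0(dy)$, and the side condition $\langle \mu_t^0, \phi(t,\cdot)\rangle = 0$ defining $\tilde H$ is automatic, so $\omega \in \tilde H$ and the rate matches (\ref{rate4fvp}).

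The main obstacle I anticipate is the rigorous handling of the distributional differentiation and the change of variables when $\mu_t^0$ has a singular part or atoms, which forces use of the generalised Radon--Nikodym derivative built into the definition of $\tilde H$. In particular, one must verify that \emph{every} $\omega \in \tilde H$ arises as $\Phi(\gamma(h))$ for some mean-zero $h$, so that the contraction infimum equals (\ref{rate4fvp}) rather than only dominating it; this is done by reading the preceding calculation backwards, defining $h(s,a) := \phi(s, (u_s^0)^{-1}(a))$ with a suitable convention on level sets of $u_s^0$ and checking that this $h$ lies in $L^2$ with the claimed norm.
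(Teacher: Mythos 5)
Your proposal is correct and follows essentially the same route as the paper: the LDP for $v^{\epsilon}$ from Theorem \ref{them1} is transferred to $\omega^{\epsilon}$ through the continuous ``distribution-function-to-measure'' map and the contraction principle, and the rate function is identified exactly as in the paper, via the shift-invariance of the controlled PDE under $h_{s}(\cdot)\mapsto h_{s}(\cdot)+c(s)$, centering $h$ to minimize the second moment, and the change of variables $a=u_{t}^{0}(y)$ pushing $\mu_{t}^{0}$ to Lebesgue measure on $[0,1]$. The only difference is cosmetic: you verify the continuity of the map directly by an integration-by-parts estimate, whereas the paper invokes the analogue of Lemma 6 of \cite{me}.
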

 Proofs of Theorems 1-3 are given in Sections 3-5.

\section{Moderate Deviations for the General SPDE}
Our goal in this section is to establish the MDP for SPDE (\ref{SPDE}), referred to as the general SPDE. Note that by our assumption $F\in \mathbb{B}_{\alpha,\beta_{0}}$, we have,
\begin{equation*}
|u_{s}^{0}(y)| \leq \int_{\mathbb{R}} p_{s}(x-y)|F(x)|dx\leq Ke^{\beta_{0}|y|},
\end{equation*}
where $p_{t}(x)= \frac{1}{\sqrt{2\pi t}}\exp(-\frac{x^{2}}{2t})$ is the heat kernel. Therefore, $G_{s}^{\epsilon}$ satisfies conditions,
\begin{eqnarray}
\int_{U}\left|G_{s}^{\epsilon}(a,y,v_{1})-G_{s}^{\epsilon}(a,y,v_{2})\right|^{2} \lambda(da) &\leq& K|v_{1}-v_{2}|,\label{MDP con1}\\
\int_{U}|G_{s}^{\epsilon}(a,y,v)|^{2}\lambda(da) &\leq& K(1+v^{2}+ e^{2\beta_{0} |y|}),\label{MDP con2}
\end{eqnarray}
for $y\in \mathbb{R}$ and $v,v_{1},v_{2}\in \mathbb{R}$ given by (\ref{centered}).

Since the proof of the uniqueness of strong solutions to SPDE (\ref{SPDE}) established in \cite{Xio} only uses condition (\ref{con1}) then the same argument can be applied to SPDE (\ref{MDP SPDE}) to achieve the uniqueness of strong solutions. SPDE (\ref{MDP SPDE}) can therefore be presented by its mild form,
\begin{equation}\label{MDP mild}
v_{t}^{\epsilon}(y)= a(\epsilon) \int_{\mathbb{R}}\int_{0}^{t}\int_{U}G_{s}^{\epsilon}(a,x,v_{s}^{\epsilon}(x))p_{t-s}(y-x)W(dsda)dx.
\end{equation}
We show that this mild solution takes values in $\mathcal{C}([0,1];\mathbb{B}_{\beta})$. To accomplish this we need the subsequent lemma.

\begin{lemma}\label{lem1}
For every $n\geq 2$,
\begin{equation}\label{Mlemma}
\tilde{M} := \sup_{0<\epsilon <1}\mathbb{E} \sup_{0\leq s\leq 1}\left(\int_{\mathbb{R}} |v_{s}^{\epsilon}(x)|^{2}e^{-2\beta_{1}|x|}dx\right)^{n} < \infty.
\end{equation}
\end{lemma}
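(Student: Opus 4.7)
The plan is to apply It\^o's formula to the weighted energy
\[
\Phi^\epsilon(t) := \int_{\mathbb{R}} v_t^\epsilon(y)^2 \, e^{-2\beta_1|y|}\,dy
\]
and then close the estimate with a Gronwall argument at the level of $n$-th moments. Since $y \mapsto e^{-2\beta_1|y|}$ fails to be $C^2$ at the origin, I would work first with the smooth regularisation $\psi_\delta(y) := e^{-2\beta_1\sqrt{y^2+\delta^2}}$, a direct computation showing that $\psi_\delta'' \leq 4\beta_1^2 \psi_\delta$ pointwise, and then pass $\delta \downarrow 0$ at the end via Fatou's lemma.

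Setting $\Phi_\delta^\epsilon(t) := \int v_t^\epsilon(y)^2 \psi_\delta(y)\,dy$, an application of It\^o's formula to $\Phi_\delta^\epsilon$ via SPDE~(\ref{MDP SPDE}), together with integration by parts on the Laplacian term (using $\psi_\delta'' \leq 4\beta_1^2\psi_\delta$ to absorb $\frac{1}{2}\int v^2\psi_\delta''\,dy$), produces
\[
\Phi_\delta^\epsilon(t) \;\leq\; 2\beta_1^2\int_0^t \Phi_\delta^\epsilon(s)\,ds \;+\; a(\epsilon)^2 \int_0^t\int_{\mathbb{R}}\int_U \bigl(G_s^\epsilon(a,y,v_s^\epsilon(y))\bigr)^2 \psi_\delta(y)\,\lambda(da)\,dy\,ds \;+\; M_\delta^\epsilon(t),
\]
where $M_\delta^\epsilon$ is a local martingale. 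The growth bound~(\ref{MDP con2}) combined with the integrability $\int e^{2\beta_0|y|}\psi_\delta(y)\,dy < \infty$ (valid since $\beta_1 > \beta_0$) reduces this to
\[
\Phi_\delta^\epsilon(t) \;\leq\; C + C \int_0^t \Phi_\delta^\epsilon(s)\,ds + M_\delta^\epsilon(t),
\]
with $C$ independent of $\epsilon \in (0,1)$ and $\delta > 0$.

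Raising to the $n$-th power, taking $\sup_{s\leq t}$ and expectation, and applying Burkholder--Davis--Gundy to $M_\delta^\epsilon$, a Cauchy--Schwarz bound on the integrand of its quadratic variation yields $\langle M_\delta^\epsilon\rangle_t \leq C a(\epsilon)^2 \int_0^t \Phi_\delta^\epsilon(s)\bigl(1 + \Phi_\delta^\epsilon(s)\bigr)\,ds$, and Young's inequality converts this into
\[
\mathbb{E}\sup_{s\leq t}\Phi_\delta^\epsilon(s)^n \;\leq\; C + C\int_0^t \mathbb{E}\sup_{r\leq s}\Phi_\delta^\epsilon(r)^n\,ds.
\]
Gronwall then delivers a bound uniform in $\epsilon$ and $\delta$, and Fatou as $\delta\downarrow 0$ gives~(\ref{Mlemma}). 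The main technical obstacle is rigorously justifying It\^o's formula for the quadratic functional $v \mapsto \int v^2 \psi_\delta\,dy$ applied to a distribution-valued SPDE solution; I would handle this by spatial mollification via the heat-kernel mild form~(\ref{MDP mild}) and a stopping-time localisation $\tau_N := \inf\{t : \Phi_\delta^\epsilon(t) \geq N\}$ guaranteeing a priori finiteness, with a bootstrap step first establishing $\sup_\epsilon \mathbb{E}\sup_t\Phi^\epsilon(t) < \infty$ before iterating to the $n$-th moment.
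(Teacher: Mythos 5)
Your proposal is correct and follows essentially the same route as the paper: there, It\^o's formula is likewise applied to the squared norm in a weighted $L^2$ space whose weight $J$ is a smooth mollification of $e^{-2\beta_1|\cdot|}$ (convolution with a bump function, so that $J''\leq K J$ plays exactly the role of your bound $\psi_\delta''\leq 4\beta_1^2\psi_\delta$), the Laplacian term is absorbed by integration by parts as in Kurtz--Xiong, and the estimate is closed with the growth condition (\ref{MDP con2}), Doob/Burkholder--Davis--Gundy and Gronwall. The only cosmetic differences are your choice of regularized weight with a $\delta\downarrow 0$ limit (not actually needed, since such a weight is already uniformly comparable to $e^{-2\beta_1|\cdot|}$) and your route to justifying It\^o's formula (mild-form mollification plus localization) versus the paper's testing against smooth compactly supported functions and summing over a CONS of $L^2(J\,dx)$.
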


\begin{proof}
We adapt the argument in the proof of Lemma 2.3 of \cite{Xio} to present setup. By Mitoma \cite{Mit} if
\begin{equation*}
  \rho(x)=  \left\{\begin{array} {ll}  C\exp\left(\frac{-1}{1-|x|^{2}}\right)
   &  |x|<1 \\
   0
  & |x|\geq 1,    \end{array}   \right.
 \end{equation*}
where $C$ is determined by $\int_{\mathbb{R}}\rho(x)dx=1$, then $g(x)=\int_{\mathbb{R}} e^{-|y|}\rho(x-y)dy$ satisfies
\begin{equation}\label{g}
Ke^{-|x|}\leq g^{(n)}(x)\leq Ke^{-|x|},
\end{equation}
where $g^{(n)}(x)$ is the $n^{\mbox{th}}$ derivative of $g(x)$. Note that (\ref{g}) stays true with $e^{-|x|}$ replaced by $e^{-2\beta_{1}|x|}$. We then consider $\int J(x)d\mu(x)<\infty$ where $J(x)= \int e^{-2\beta_{1}|y|}\rho(x-y)dy<\infty$ for the definition of $\mathcal{M}_{\beta}(\mathbb{R})$ given by (\ref{Mbeta}).

We denote the Hilbert space $L^{2}\left(\mathbb{R}, J(x)dx\right)$ by $\mathcal{X}_{0}$. Applying It$\hat{o}$'s formula to (\ref{MDP SPDE}) we have for every $f\in \mathcal{C}_{c}^{\infty}(\mathbb{R})\cap \mathcal{X}_{0}$,
\begin{eqnarray}\label{above}
<v_{t}^{\epsilon},f>_{\mathcal{X}_{0}}&=& a(\epsilon)\int_{\mathbb{R}}\int_{0}^{t}\int_{U} G_{s}^{\epsilon}(a,y,v_{s}^{\epsilon}(y))f(y)J(y)W(dsda)dy \nonumber\\
&& \hspace{.3cm} + \int_{0}^{t}<\frac{1}{2} \Delta v_{s}^{\epsilon},f>_{\mathcal{X}_{0}}ds,
\end{eqnarray}
It$\hat{o}$'s formula applied again this time to (\ref{above}) gives,
\begin{eqnarray}
&&<v_{t}^{\epsilon},f>_{\mathcal{X}_{0}}^{2}\\
&&= 2a(\epsilon)\int_{0}^{t} <v_{s}^{\epsilon},f>_{\mathcal{X}_{0}}\int_{U}\int_{\mathbb{R}} G_{s}^{\epsilon}(a,y,v_{s}^{\epsilon}(y))f(y)J(y)dyW(dsda)\nonumber\\
&& \hspace{.3cm} +\int_{0}^{t} <v_{s}^{\epsilon},f>_{\mathcal{X}_{0}}< \Delta v_{s}^{\epsilon},f>_{\mathcal{X}_{0}}ds \nonumber\\
&&\hspace{.3cm} + a(\epsilon)^2\int_{0}^{t}\int_{U}\left(\int_{\mathbb{R}} G_{s}^{\epsilon}(a,y,v_{s}^{\epsilon}(y))f(y)J(y)dy\right)^{2}\lambda(da)ds. \nonumber
\end{eqnarray}
Now we sum over a complete orthonormal system (CONS) of $\mathcal{X}_0$, $\{f_{j}\}_{j}$ to obtain,
\begin{eqnarray*}
\|v_{t}^{\epsilon}\|_{\mathcal{X}_{0}}^{2} &=& 2a(\epsilon)\int_{0}^{t}\int_{U}<v_{s}^{\epsilon},G_{s}^{\epsilon}(a,\cdot,v_{s}^{\epsilon}(\cdot))>_{\mathcal{X}_{0}}
W(dsda)\\
&&\hspace{.3cm} + \int_{0}^{t} <v_{s}^{\epsilon},\Delta v_{s}^{\epsilon}>_{\mathcal{X}_{0}}ds \nonumber\\
&&\hspace{.3cm} + a(\epsilon)^2\int_{0}^{t}\int_{U}\int_{\mathbb{R}} G_{s}^{\epsilon}(a,y,v_{s}^{\epsilon}(y))^{2}J(y)dy\lambda(da)ds.
\end{eqnarray*}
By It$\hat{o}$'s formula,
\begin{eqnarray*}
&&\|v_{t}^{\epsilon}\|_{\mathcal{X}_{0}}^{2p} \\
&&= 2a(\epsilon)p\int_{U}\int_{0}^{t}\|v_{s}^{\epsilon}\|_{\mathcal{X}_{0}}^{2(p-1)}<v_{s}^{\epsilon},G_{s}^{\epsilon}
(a,\cdot,v_{s}^{\epsilon})
>_{\mathcal{X}_{0}}W(dsda)\\
&&\hspace{.3cm} +
\int_{0}^{t}p\|v_{s}^{\epsilon}\|_{\mathcal{X}_{0}}^{2(p-1)} <v_{s}^{\epsilon},\Delta v_{s}^{\epsilon}>_{\mathcal{X}_{0}}ds \\
&&\hspace{.3cm} + a(\epsilon)^2p\int_{0}^{t}\|v_{s}^{\epsilon}\|_{\mathcal{X}_{0}}^{2(p-1)}\int_{U}\int_{\mathbb{R}} G_{s}^{\epsilon}(a,y,v_{s}^{\epsilon}(y))^{2}J(y)dy\lambda(da)ds\\
&&\hspace{.3cm} + a(\epsilon)p(p-1)\int_{0}^{t}\int_{U} \|v_{s}^{\epsilon}\|_{\mathcal{X}_{0}}^{2(p-2)}<v_{s}^{\epsilon},G_{s}^{\epsilon}(a,\cdot,v_{s}^{\epsilon}(\cdot))
>_{\mathcal{X}_{0}}^{2}\lambda(da)ds.
\end{eqnarray*}
Similar to Kurtz and Xiong \cite{KX}, we can prove that
\[
-\int_{\mathbb{R}} v_{s}^{\epsilon}(y)\left(v_{s}^{\epsilon}\right)'(y)J'(y)dy = \frac{1}{2}\int_{\mathbb{R}} v_{s}^{\epsilon}(y)^{2}J''(y)dy\leq K\|v_{s}^{\epsilon}\|_{\mathcal{X}_{0}}^{2}\]
and
\begin{eqnarray*}
<v_{s}^{\epsilon},\Delta v_{s}^{\epsilon}>_{\mathcal{X}_{0}}
&\le& -\int_{\mathbb{R}}\left(v_{s}^{\epsilon}\right)'(y) v_{s}^{\epsilon}(y)J'(y)dy\\
&\leq& K\|v_{s}^{\epsilon}\|_{\mathcal{X}_{0}}^{2}.
\end{eqnarray*}
Hence with the help of Doob's and Burkholder-Davis-Gundy inequalities we have,
\begin{eqnarray*}
&&\mathbb{E}\sup_{0\leq s\leq t}\|v_{s}^{\epsilon}\|_{\mathcal{X}_{0}}^{2p} \\
&&\leq Ka(\epsilon)\mathbb{E}\left(\int_{0}^{t}\int_{U}\|v_{s}^{\epsilon}\|_{\mathcal{X}_{0}}^{4(p-1)}\left<v_{s}^{\epsilon},
G_{s}^{\epsilon}(a,y,v_{s}^{\epsilon}(y))\right>_{\mathcal{X}_{0}}^{2}ds\lambda(da)\right)^{\frac{1}{2}}\\
&&\hspace{.3cm}+ K\mathbb{E}\int_{0}^{t}\|v_{s}^{\epsilon}\|_{\mathcal{X}_{0}}^{2p}ds \\
&&\hspace{.3cm}+ Ka(\epsilon)\mathbb{E}\int_{0}^{t}\int_{U}\int_{\mathbb{R}} \|v_{s}^{\epsilon}\|_{\mathcal{X}_{0}}^{2(p-1)}G_{s}^{\epsilon}(a,y,v_{s}^{\epsilon}(y))^{2}J(y)dyds\lambda(da)\\
&&\hspace{.3cm}+ Ka(\epsilon)\mathbb{E}\int_{0}^{t}\int_{U}\|v_{s}^{\epsilon}\|_{\mathcal{X}_{0}}^{2(p-2)}\left<v_{s}^{\epsilon},
G_{s}^{\epsilon}(a,y,v_{s}^{\epsilon}(y))\right>_{\mathcal{X}_{0}}^{2}ds\lambda(da).
\end{eqnarray*}
Now we apply H$\ddot{o}$lder's inequality and (\ref{MDP con2}) to arrive at
\begin{eqnarray*}
\mathbb{E}\sup_{0\leq s\leq t}\|v_{s}^{\epsilon}\|_{\mathcal{X}_{0}}^{2p} &\leq&
Ka(\epsilon)\mathbb{E}\left(\int_{0}^{t}\|v_{s}^{\epsilon}\|_{\mathcal{X}_{0}}^{4p}
ds\right)^{1/2}
+ K\mathbb{E}\int_{0}^{t}\|v_{s}^{\epsilon}\|_{\mathcal{X}_{0}}^{2p}ds\\
&\le&Ka(\epsilon)\mathbb{E}\sup_{0\le s\le t}\|v_{s}^{\epsilon}\|_{\mathcal{X}_{0}}^p \left(\int_{0}^{t}\|v_{s}^{\epsilon}\|_{\mathcal{X}_{0}}^{2p}
ds\right)^{1/2} \\
&&\hspace{.2cm}+ K\mathbb{E}\int_{0}^{t}\|v_{s}^{\epsilon}\|_{\mathcal{X}_{0}}^{2p}ds\\
&\le&\frac12\mathbb{E}\sup_{0\leq s\leq t}\|v_{s}^{\epsilon}\|_{\mathcal{X}_{0}}^{2p}
+K_1\mathbb{E}\int_{0}^{t}\|v_{s}^{\epsilon}\|_{\mathcal{X}_{0}}^{2p}ds.
\end{eqnarray*}
The conclusion then follows from Gronwall's inequality.
\end{proof}

For our results, we also apply the lemma given below, the proof of which we have provided in \cite{me}.
\begin{lemma}\label{lemma2}
Let $\{X^{\epsilon}_t(y)\}$ be a family of random fields and
suppose $\beta_1\in (\beta_{0},\beta)$. If there exist constants
$n,\;q,\;K>0$ such that
\begin{equation}\label{kolmogorov}
\mathbb{E}\left|X^{\epsilon}_{t_1}(y_1)-X^{\epsilon}_{t_2}(y_2)\right|^n
\leq
Ke^{n\beta_1(|y_1|\vee|y_2|)}\left(|y_1-y_2|+|t_1-t_2|\right)^{2+q},
\end{equation}
then there exists a constant $\alpha >0$ such that
\begin{equation}\label{Holder}
\sup_{\epsilon>0} \mathbb{E}\left|\sup_{m} \sup_{t_{i} \in [0,1],
|y_{i}|\leq m,i=1,2}
\frac{\left|X^{\epsilon}_{t_1}(y_1)-X^{\epsilon}_{t_2}(y_2)\right|}{\left(|y_1-y_2|+|t_1-t_2|\right)^{\alpha}}e^{-\beta
m}\right|^n < \infty.
\end{equation}
As a consequence $X_{.}^{\epsilon} \in \mathcal{C}\left([0,1];\mathbb{B}_{\beta}\right)$ a.s. Furthermore, if condition (\ref{kolmogorov}) holds and
$\displaystyle\sup_{\epsilon>0}\mathbb{E}\left|X^{\epsilon}_{t_{0}}(y_{0})\right|^n
< \infty$ for some $(t_{0},y_{0}) \in [0,1]\times \mathbb{R}$, then
\begin{equation}\label{bdd}
\sup_{\epsilon>0}\mathbb{E}\left|\displaystyle \sup_{(t,y)\in [0,1]\times
\mathbb{R}} e^{-\beta|y|} |X_{t}^{\epsilon}(y)|\right|^n<\infty,
\end{equation}
and the family $\{X_{.}^{\epsilon}\}$ is tight in $\mathcal{C}\left([0,1];\mathbb{B}_{\beta}\right)$.
\end{lemma}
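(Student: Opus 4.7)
My plan is to run the standard two-parameter Kolmogorov--Chentsov argument on each compact box $[0,1]\times[-m,m]$ and then glue the resulting Hölder bounds together by summing in $m$, using the gap $\beta-\beta_1>0$ to control the exponential growth in $m$ that the hypothesis (\ref{kolmogorov}) permits on the increment moments.

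First I restrict (\ref{kolmogorov}) to $|y_i|\le m$ to obtain $\mathbb{E}|X^\epsilon_{t_1}(y_1)-X^\epsilon_{t_2}(y_2)|^n\le Ce^{n\beta_1 m}(|y_1-y_2|+|t_1-t_2|)^{2+q}$. Since the exponent $2+q$ exceeds the dimension of the box, a standard dyadic chaining produces a jointly continuous modification of $X^\epsilon$ on $[0,1]\times[-m,m]$ whose Hölder seminorm
\[
K^\epsilon_m:=\sup_{(t_i,y_i)\in[0,1]\times[-m,m],\,(t_1,y_1)\neq(t_2,y_2)}\frac{|X^\epsilon_{t_1}(y_1)-X^\epsilon_{t_2}(y_2)|}{(|y_1-y_2|+|t_1-t_2|)^\alpha}
\]
satisfies $\sup_\epsilon\mathbb{E}(K^\epsilon_m)^n\le Cp(m)e^{n\beta_1 m}$ for every sufficiently small $\alpha\in(0,q/n)$, where $p(m)$ is a polynomial that comes from the diameter of the box and the bookkeeping of the chaining.

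Next I recognise the quantity in (\ref{Holder}) as $Y^\epsilon:=\sup_m e^{-\beta m}K^\epsilon_m$ and dominate $\sup_m$ by $\sum_m$ to obtain
\[
\mathbb{E}(Y^\epsilon)^n\le\sum_m e^{-n\beta m}\mathbb{E}(K^\epsilon_m)^n\le C\sum_m p(m)e^{-n(\beta-\beta_1)m}<\infty,
\]
uniformly in $\epsilon$ because $\beta_1<\beta$; this gives (\ref{Holder}), and then almost-sure membership in $\mathcal{C}([0,1];\mathbb{B}_\beta)$ follows by setting $y_1=y_2=y$ in the Hölder estimate and multiplying by $e^{-\beta|y|}$. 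For (\ref{bdd}), after relabeling so that $(t_0,y_0)=(0,0)$, the triangle inequality on each annulus $m-1\le|y|\le m$ yields
\[
\sup_{(t,y)\in[0,1]\times\mathbb{R}}e^{-\beta|y|}|X^\epsilon_t(y)|\le e^{\beta}|X^\epsilon_0(0)|+e^{\beta}\sup_m e^{-\beta m}(m+1)^\alpha K^\epsilon_m,
\]
and the same summing-in-$m$ argument (with the extra polynomial weight $(m+1)^{n\alpha}$ absorbed by the exponential gap) produces the desired $L^n$ bound. Tightness in $\mathcal{C}([0,1];\mathbb{B}_\beta)$ then follows by a weighted Arzelà--Ascoli argument: Markov's inequality applied to $Y^\epsilon$ supplies a deterministic modulus of continuity on each ball $\{|y|\le m\}$ with arbitrarily high probability uniformly in $\epsilon$, and (\ref{bdd}) provides the spatial decay needed to turn this equicontinuity into compactness in $\mathbb{B}_\beta$.

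The main obstacle is the bookkeeping of the polynomial factor $p(m)$ (and the additional $(m+1)^{n\alpha}$ in Step~3) that accompany the exponential $e^{n\beta_1 m}$: one must take the Hölder exponent $\alpha$ strictly below $q/n$ and verify that these polynomial corrections are still dominated by the exponential gain $e^{-n(\beta-\beta_1)m}$. That this works at all is exactly why the hypothesis is stated with two parameters $\beta_1<\beta$ rather than a single weight, and it is the one nontrivial step in an otherwise standard Kolmogorov-continuity argument.
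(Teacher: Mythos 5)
Your overall strategy is the right one, and it is essentially the standard route: the paper itself does not prove this lemma here but defers it to \cite{me}, and the expected argument is exactly a weighted two-parameter Kolmogorov--Chentsov estimate on the boxes $[0,1]\times[-m,m]$ (giving $\mathbb{E}(K^\epsilon_m)^n\leq Cp(m)e^{n\beta_1 m}$ for $\alpha<q/n$), followed by summation over $m$ using the gap $\beta_1<\beta$, and the derivation of (\ref{bdd}) from (\ref{Holder}) plus the one-point moment bound is also correct as you set it up.

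The one step that does not work as literally stated is the final tightness claim. Boundedness of the $\beta$-weighted sup norm together with equicontinuity on compacts (i.e.\ (\ref{Holder}) and (\ref{bdd}) with the weight $e^{-\beta|y|}$ itself) does \emph{not} give relative compactness in $\mathbb{B}_{\beta}$: a sequence such as $f_k(y)=e^{\beta\min(y,k)}$ for $y\geq 0$ (suitably extended) has uniformly bounded $\|\cdot\|_{\beta}$ norm and satisfies the local H\"older bounds with constant $Ke^{\beta m}$ on $[-m,m]$, yet it has no subsequence converging in $\mathbb{B}_{\beta}$, because the weighted norm does not control the tails uniformly. So Markov's inequality applied to the quantities in (\ref{Holder}) and (\ref{bdd}) produces sets that are closed and bounded in the $\beta$-weighted norms but not compact in $\mathbb{B}_{\beta}$. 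The repair is the same device you already use for the summation: since the hypothesis (\ref{kolmogorov}) only involves $\beta_1$, run your entire argument with some $\beta'\in(\beta_1,\beta)$ in place of $\beta$, obtaining (\ref{Holder}) and (\ref{bdd}) with weight $e^{-\beta'|y|}$; balls in the $\beta'$-weighted H\"older/sup norms \emph{are} relatively compact in $\mathcal{C}([0,1];\mathbb{B}_{\beta})$ (the factor $e^{-(\beta-\beta')|y|}$ makes the tail contribution to the $\mathbb{B}_{\beta}$ norm uniformly small, and it also upgrades local uniform convergence and the time-H\"older bound to convergence and equicontinuity in the $\mathbb{B}_{\beta}$ norm). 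With that substitution your Markov-plus-Arzel\`a--Ascoli argument does yield tightness, and this second use of the strict inequality $\beta_1<\beta$ is precisely why the lemma is stated with two weights.
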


\begin{lemma}\label{lemm2}
The solution to SPDE (\ref{MDP SPDE}) takes values in $\mathcal{C}([0,1]; \mathbb{B}_{\beta})$.
\end{lemma}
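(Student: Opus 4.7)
The plan is to apply the Kolmogorov-type criterion of Lemma \ref{lemma2} to the mild form (\ref{MDP mild}) of the solution. For a sufficiently large even $n$, I will verify that
\[
\mathbb{E}\bigl|v^\epsilon_{t_1}(y_1) - v^\epsilon_{t_2}(y_2)\bigr|^n \leq K e^{n\beta_1(|y_1| \vee |y_2|)}\bigl(|y_1-y_2|+|t_1-t_2|\bigr)^{2+q}
\]
holds uniformly in $\epsilon$, for some $\beta_1\in(\beta_0,\beta)$ and some $q>0$. Once this is established, Lemma \ref{lemma2} gives $v^\epsilon_\cdot \in \mathcal{C}([0,1]; \mathbb{B}_\beta)$ almost surely.

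To prove the moment estimate I will decompose the increment into spatial and temporal pieces,
\[
v^\epsilon_{t_1}(y_1) - v^\epsilon_{t_2}(y_2) = \bigl[v^\epsilon_{t_1}(y_1) - v^\epsilon_{t_1}(y_2)\bigr] + \bigl[v^\epsilon_{t_1}(y_2) - v^\epsilon_{t_2}(y_2)\bigr],
\]
and treat each via the mild form. Each is a stochastic integral against $W$, so the Burkholder-Davis-Gundy inequality bounds the $n$th moment by the $n/2$ moment of the quadratic variation, which involves the squared spatial (respectively temporal) increment of the heat kernel. The growth condition (\ref{MDP con2}) then replaces $|G_s^\epsilon(a, x, v_s^\epsilon(x))|^2$ by $K\bigl(1 + (v_s^\epsilon(x))^2 + e^{2\beta_0 |x|}\bigr)$, splitting the resulting bound into a deterministic and a random part.

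For the deterministic pieces ($1$ and $e^{2\beta_0|x|}$), standard Gaussian computations give, for any $\alpha\in(0,1/2)$,
\[
\int_\mathbb{R} e^{2\beta_0|x|}\bigl[p_r(y_1-x)-p_r(y_2-x)\bigr]^2 dx \leq K e^{2\beta_1(|y_1|\vee|y_2|)} r^{-(1+2\alpha)/2}|y_1-y_2|^{2\alpha},
\]
and an analogous estimate for the time increment. The condition $\alpha<1/2$ is exactly what is needed to integrate $r^{-(1+2\alpha)/2}$ over $s\in[0,t_1]$, and the factor $e^{2\beta_1(|y_1|\vee|y_2|)}$ appears on pulling the weight $e^{2\beta_0|x|}$ out against the Gaussian tails. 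For the random contribution, I will use the pointwise bound
\[
\int_\mathbb{R} (v_s^\epsilon(x))^2 \bigl[p_r(y_1-x)-p_r(y_2-x)\bigr]^2 dx \leq \Bigl(\sup_{x}e^{2\beta_1|x|}\bigl[p_r(y_1-x)-p_r(y_2-x)\bigr]^2\Bigr)\int (v_s^\epsilon(x))^2 e^{-2\beta_1|x|}\,dx,
\]
and then invoke Lemma \ref{lem1}, which supplies all moments of the weighted $L^2$ norm on the right-hand side uniformly in $\epsilon$ and $s$.

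The principal difficulty will be the careful bookkeeping of the three exponents $\beta_0 < \beta_1 < \beta$ through each estimate so that the final bound genuinely carries the factor $e^{n\beta_1(|y_1|\vee|y_2|)}$ demanded by (\ref{kolmogorov}), and the simultaneous extraction of a positive Hölder exponent in both $|y_1-y_2|$ and $|t_1-t_2|$ while keeping the singularity $r^{-(1+2\alpha)/2}$ integrable. The time-increment term requires some extra care because the mild form produces an additional contribution from the mismatch of the integration intervals $[0,t_1]$ versus $[0,t_2]$, but this term is handled by the same BDG/Lemma \ref{lem1} tandem applied to a heat-kernel increment in the time variable.
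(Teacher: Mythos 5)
Your overall strategy coincides with the paper's: write $v^{\epsilon}$ in the mild form (\ref{MDP mild}), prove the two--parameter moment estimate (\ref{kolmogorov}) by splitting into a spatial increment and a temporal increment (the latter with the extra contribution from the mismatch $[0,t_1]$ versus $[0,t_2]$), use Burkholder--Davis--Gundy, the growth bound (\ref{MDP con2}) and Lemma \ref{lem1}, and then invoke Lemma \ref{lemma2}. The gap is in how you pass from the quadratic variation to the heat--kernel estimates. The noise $W$ is white in $(s,a)$ only; in the mild form the $dx$--integration sits \emph{inside} the stochastic integrand, so after BDG you must control
\[
\int_0^t\!\!\int_U\Bigl(\int_{\mathbb{R}}\bigl[p_{t-s}(y_1-x)-p_{t-s}(y_2-x)\bigr]\,G^{\epsilon}_s\bigl(a,x,v^{\epsilon}_s(x)\bigr)\,dx\Bigr)^2\lambda(da)\,ds ,
\]
with the square \emph{outside} the $dx$--integral. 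Your plan to ``replace $|G^{\epsilon}_s|^2$ by $K(1+v^2+e^{2\beta_0|x|})$'' directly under an $x$--integral against the squared kernel increment tacitly treats the noise as if it were also white in $x$, which it is not. The paper first applies a weighted Cauchy--Schwarz inequality in $x$,
\[
\Bigl(\int_{\mathbb{R}}P_1\,G\,dx\Bigr)^2\le\int_{\mathbb{R}}P_1^2\,e^{2\beta_1|x|}dx\;\cdot\;\int_{\mathbb{R}}G^2\,e^{-2\beta_1|x|}dx,
\]
and only then uses (\ref{MDP con2}); the kernel increment then enters only through its weighted $L^2$ norm in $x$, which is exactly what (\ref{P1}), (\ref{P2}) and (\ref{p2}) estimate.

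Even granting the interchange, your treatment of the random contribution does not close. You bound $\int v^2 P_1^2\,dx$ by $\bigl(\sup_x e^{2\beta_1|x|}P_1^2\bigr)\int v^2 e^{-2\beta_1|x|}dx$, but $\sup_x\bigl[p_r(y_1-x)-p_r(y_2-x)\bigr]^2$ is of order $r^{-1}\min\bigl(1,|y_1-y_2|^2/r\bigr)$: any interpolation extracting a positive power of $|y_1-y_2|$ leaves a factor $r^{-1-\delta}$ with $\delta>0$, and even $\delta=0$ already makes $\int_0^t (t-s)^{-1}ds$ diverge, so no admissible H\"older exponent survives. By contrast, the weighted $L^2_x$ norm of the increment is of order $(t-s)^{-(\frac12+\alpha)}|y_1-y_2|^{\alpha}$, integrable precisely because $\alpha<\frac12$. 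The single structural correction --- Cauchy--Schwarz with the weights $e^{\pm2\beta_1|x|}$ \emph{before} invoking (\ref{MDP con2}) --- repairs both points at once: the $v^2$ term then rides along with the same (\ref{P1})--type factor as your deterministic terms, and Lemma \ref{lem1} supplies the uniform moments of $\int v^2 e^{-2\beta_1|x|}dx$ needed to finish, exactly as in the paper's proof.
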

\begin{proof}
First we need the following inequalities established in \cite{me}:

\begin{equation}\label{PP1}
P_{1}:= p_{t-s}(y_{1}-x)-p_{t-s}(y_{2}-x),
\end{equation}

\begin{equation}\label{PP2}
P_{2}:=p_{t_{1}-s}(y-x)-p_{t_{2}-s}(y-x),
\end{equation}

\begin{equation}\label{P1}
\int_{\mathbb{R}}|P_{1}|^{2}e^{2\beta_{1}|x|}dx\leq K e^{2\beta_{1}(|y_{1}|\vee |y_{2}|)} (t-s)^{-(\frac{1}{2}+\alpha)}|y_{1}-y_{2}|^{\alpha},
\end{equation}

\begin{equation}\label{P2}
\int_{0}^{t_{1}}\int_{\mathbb{R}} |P_{2}|^{2}e^{2\beta_{1}|x|}dxds\leq K e^{2\beta_{1}|y|} |t_{1}-t_{2}|^{\alpha},
\end{equation}
and
\begin{equation}\label{p2}
\int_{t_{1}}^{t_{2}} \int_{\mathbb{R}} p_{t_{2}-s}^{2}(y-x)e^{2\beta_{1}|x|}dxds\leq K|t_{1}-t_{2}|^{\frac{\alpha}{2}} e^{2\beta_{1}|y|}.
\end{equation}
We proceed by demonstrating two cases. In case one, we fix $t\in [0,1]$ and let $y_{1},y_{2}\in \mathbb{R}$ be arbitrary such that $|y_{i}|\leq m$ for all $i=1,2$. Applying Burkholder-Davis-Gundy and H$\ddot{o}$lder's inequalities, for $n>0$ we obtain,
\begin{eqnarray*}
&&\mathbb{E}\left|v_{t}^{\epsilon}(y_{1})-v_{t}^{\epsilon}(y_{2})\right|^{n}\\
&=& \mathbb{E}\left|a(\epsilon)\int_{0}^{t}\int_{U}\int_{\mathbb{R}} P_{1} G_{s}^{\epsilon}(a,x,v_{s}^{\epsilon}(x))W(dsda)dx\right|^{n}\\
&\leq& K\mathbb{E}\left(a(\epsilon)^{2}\int_{0}^{t}\int_{U}\left(\int_{\mathbb{R}}P_{1}G_{s}^{\epsilon}(a,x,v_{s}^{\epsilon}(x))dx\right)^{2}dsda
\right)^{n/2}\\
&\leq& K \mathbb{E}\left(a(\epsilon)^{2}\int_{0}^{t}\int_{U}\int_{\mathbb{R}} P_{1}^{2}e^{2\beta_{1}|x|}dx\int_{\mathbb{R}}G_{s}^{\epsilon}(a,x,v_{s}^{\epsilon}(x))^{2}e^{-2\beta_{1}|x|}dx\lambda(da)ds\right)^{n/2}\\
&\leq&  K \mathbb{E}\left(a(\epsilon)^{2}\int_{0}^{t}\int_{\mathbb{R}} P_{1}^{2}e^{2\beta_{1}|x|}dx\int_{\mathbb{R}}\left(1+ v_{s}^{\epsilon}(x)^{2}+ e^{2\beta_{0}|x|}\right)e^{-2\beta_{1}|x|}dxds\right)^{n/2}.
\end{eqnarray*}
By (\ref{P1}) we have,
\begin{eqnarray}\label{t fixed}
&&\mathbb{E}|v_{t}^{\epsilon}(y_{1})-v_{t}^{\epsilon}(y_{2})|^{n} \\
&\leq& K \mathbb{E}\left(\int_{0}^{t} e^{2\beta_{1}(|y_{1}|\vee |y_{2}|)}(t-s)^{-(\frac{1}{2}+\alpha)}  |y_{1}-y_{2}|^{\alpha} \int_{\mathbb{R}}v_{s}^{\epsilon}(x)^{2}e^{-2\beta_{1}|x|}dxds\right)^{n/2}\nonumber \\
&\leq& \bar{M}Ke^{n\beta_{1}(|y_{1}|\vee |y_{2}|)}|y_{1}-y_{2}|^{\frac{n\alpha}{2}}. \nonumber
\end{eqnarray}
For the second case, we consider $y\in \mathbb{R}$ to be fixed and assume $t_{1},t_{2}\in [0,1]$ to be arbitrary, then by (\ref{P2}) and (\ref{p2}),
\begin{eqnarray}\label{y fixed}
&&\mathbb{E}\left|v_{t_{1}}^{\epsilon}(y)-v_{t_{2}}^{\epsilon}(y)\right|^{n} \\
&\leq& K \mathbb{E}\left|a(\epsilon)\int_{0}^{t_{1}}\int_{\mathbb{R}}\int_{U} P_{2} G_{s}^{\epsilon}(a,x,v_{s}(x)) W(dsda)dx\right|^{n}\nonumber\\
&&\hspace{.2cm}+ K \mathbb{E} \left|a(\epsilon)\int_{t_{1}}^{t_{2}} \int_{\mathbb{R}} \int_{U} p_{t_{2}-s}(y-x)G_{s}^{\epsilon}(a,x,v_{s}(x)) W(dsda)dx\right|^{n}\nonumber\\
&\leq& K\mathbb{E}\left|\int_{0}^{t_{1}} \int_{\mathbb{R}} P_{2}^{2} e^{2\beta_{1}|x|}dx\int_{\mathbb{R}} \left(K+ v_{s}^{\epsilon}(x)^{2}\right)e^{-2\beta_{1}|x|}dxds\right|^{\frac{n}{2}}\nonumber\\
&&\hspace{.2cm} + K\mathbb{E}\left|\int_{t_{1}}^{t_{2}} \int_{\mathbb{R}} p_{t_{2}-s}^{2}(y-x)e^{2\beta_{1}|x|}dx\int_{\mathbb{R}} \left(K + v_{s}^{\epsilon}(x)^{2}\right)e^{-2\beta_{1}|x|}dxds\right|^{\frac{n}{2}}\nonumber\\
&\leq& \bar{M}K\left|\int_{0}^{t_{1}}\int_{\mathbb{R}} P_{2}^{2}e^{2\beta_{1}|x|}dx\right|^{\frac{n}{2}}
+ \bar{M}K\left|\int_{t_{1}}^{t_{2}}\int_{\mathbb{R}} p_{t_{2}-s}^{2}(y-x)e^{2\beta_{1}|x|}dxds\right|^{\frac{n}{2}}\nonumber \\
&\leq& K e^{n\beta_{1}|y|}|t_{1}-t_{2}|^{\frac{\alpha n}{2}} + Ke^{n\beta_{1}|y|}|t_{1}-t_{2}|^{\frac{\alpha n}{4}}\nonumber \\
&\leq& Ke^{n\beta_{1}|y|}|t_{1}-t_{2}|^{\frac{\alpha n}{4}}, \nonumber
\end{eqnarray}
where in the last step we have used the fact that $\left|t_{1}-t_{2}\right| <1$.
\end{proof}

We now prove Theorem 1 by applying a technique offered by Budhiraja, et al in \cite{Bud}. To match their setup, we write SPDE (\ref{MDP SPDE}) as an infinite sum of independent Brownian motions as follows. Suppose $\{\phi_{j}\}_{j}$ is a CONS of $L^{2}(U,\mathcal{U},\lambda)$ then,
\begin{equation}
B_{t}^{j}:= \int_{0}^{t}\int_{U}\phi_{j}(a)W(dsda), \hspace{1cm} j=1,2,...
\end{equation}
is a sequence of independent Brownian motions by L$\acute{e}$vy's characterization of Brownian motions. We can then present SPDE (\ref{MDP SPDE}) in the following form,
\begin{equation}\label{sumMDP}
v_{t}^{\epsilon}(y)= a(\epsilon) \sum_{j}\int_{0}^{t} G_{s}^{\epsilon,j}(y,v_{s}^{\epsilon}(y)) dB_{s}^{j} + \frac{1}{2}\int_{0}^{t}\Delta v_{s}^{\epsilon}(y)ds,
\end{equation}
where
\begin{equation}
G_{s}^{\epsilon,j}(y,v):= \int_{U}G_{s}^{\epsilon}(a,y,v)\phi_{j}(a)\lambda(da).
\end{equation}
Similarly, the controlled PDE (\ref{controlled}) can be written as
\begin{equation}
v_{t}(y)= \sum_{j}\int_{0}^{t}\int_{U}G(a,y,u_{s}^{0}(y)) k_{s}^{j}\phi_{j}(a)\lambda(da)ds + \frac{1}{2}\int_{0}^{t}\Delta v_{s}(y)ds,
\end{equation}
where
\begin{equation*}
k_{s}^{j}:= \int_{U} h_{s}(a)\phi_{j}(a)\lambda(da).
\end{equation*}
By the same argument as in \cite{Xio}, SPDE (\ref{sumMDP}) has a strong solution so there exists a map $g^{\epsilon}: \mathbb{B}_{\alpha, \beta_{0}} \times S \rightarrow \mathcal{C}\left([0,1];\mathbb{B}_{\beta}\right)$ such that $v^{\epsilon}=g^{\epsilon}\left(a(\epsilon)B\right)$ where $B=\{B_{t}^{j}\}$.
 We now define
\begin{equation}\label{N}
\mathcal{S}^{N}(\ell_{2}):= \{k \in L^{2}([0,1], \ell_{2}):\int_{0}^{1}\|k_{s}\|_{\ell_{2}}^{2}ds\leq N\}.
\end{equation}
To verify the assumption imposed by Theorem 6 in \cite{Bud} let $\{k^{\epsilon}\}$ be a family of random variables taking values in $\mathcal{S}^{N}(\ell_{2})$ such that $k^{\epsilon}\rightarrow k$ in distribution as $\epsilon\rightarrow 0$ and consider the SPDE,
\begin{eqnarray}\label{solutionSPDE}
v_{t}^{\theta,\epsilon}(y)&=& \theta \sum_{j} \int_{0}^{t}\int_{\mathbb{R}} p_{t-s}(y-x)G_{s}^{\epsilon,j}(x,v_{s}^{\theta, \epsilon}(x)) dB_{s}^{j}dx  \\
&& \hspace{.3cm} + \sum_{j}\int_{0}^{t}\int_{\mathbb{R}} p_{t-s}(y-x)G_{s}^{\epsilon,j}(x,v_{s}^{\theta, \epsilon}(x))k_{s}^{\epsilon,j}dxds. \nonumber
\end{eqnarray}
We establish the tightness of $\{v^{\theta,\epsilon}\}$ as follows.

\begin{lemma}
$v_{t}^{\theta,\epsilon}(y)$ is tight in $\mathcal{C}([0,1],\mathbb{B}_{\beta})$.
\end{lemma}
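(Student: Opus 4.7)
The plan is to verify the hypotheses of Lemma \ref{lemma2} for the two-parameter family $\{v^{\theta,\epsilon}\}$ and deduce tightness in $\mathcal{C}([0,1];\mathbb{B}_{\beta})$. Write (\ref{solutionSPDE}) as $v^{\theta,\epsilon}=A^{\theta,\epsilon}+D^{\theta,\epsilon}$, where $A^{\theta,\epsilon}$ is the stochastic-integral term of size $\theta$ and $D^{\theta,\epsilon}$ is the controlled drift involving $k^{\epsilon}$. Since $k^{\epsilon}\in\mathcal{S}^{N}(\ell_{2})$ almost surely, the key tool for $D^{\theta,\epsilon}$ will be a Cauchy--Schwarz argument exploiting $\int_{0}^{1}\|k^{\epsilon}_{s}\|_{\ell_{2}}^{2}ds\leq N$, together with the Parseval identity $\sum_{j}|G_{s}^{\epsilon,j}(y,v)|^{2}=\int_{U}|G_{s}^{\epsilon}(a,y,v)|^{2}\lambda(da)$, which reduces the estimates to the familiar quantities controlled by (\ref{MDP con2}).

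First I would reproduce the $\mathcal{X}_{0}$-moment estimate of Lemma \ref{lem1} for $v^{\theta,\epsilon}$. Applying It\^o's formula to $\|v_{t}^{\theta,\epsilon}\|_{\mathcal{X}_{0}}^{2p}$, the stochastic and Laplacian terms are handled exactly as before (absorbing a $\frac12$ factor via BDG), and the new drift term contributes
\[
p\int_{0}^{t}\|v_{s}^{\theta,\epsilon}\|_{\mathcal{X}_{0}}^{2(p-1)}\sum_{j}\langle v_{s}^{\theta,\epsilon},G_{s}^{\epsilon,j}(\cdot,v_{s}^{\theta,\epsilon})\rangle_{\mathcal{X}_{0}}k_{s}^{\epsilon,j}\,ds,
\]
which by Cauchy--Schwarz in $j$ and in space is bounded by $K\|k_{s}^{\epsilon}\|_{\ell_{2}}\|v_{s}^{\theta,\epsilon}\|_{\mathcal{X}_{0}}^{2p-1}(1+\|v_{s}^{\theta,\epsilon}\|_{\mathcal{X}_{0}})$ via (\ref{MDP con2}). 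Young's inequality and the bound $\int_{0}^{1}\|k_{s}^{\epsilon}\|_{\ell_{2}}^{2}ds\leq N$ then yield, after Gronwall, $\sup_{\epsilon}\mathbb{E}\sup_{s\leq 1}\|v_{s}^{\theta,\epsilon}\|_{\mathcal{X}_{0}}^{2p}<\infty$ for every $p\geq 1$ (with a constant depending on $\theta$ and $N$ only).

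Next I would establish the Kolmogorov-type bound (\ref{kolmogorov}). For the stochastic part $A^{\theta,\epsilon}$, the computations of Lemma \ref{lemm2} carry over verbatim (with $a(\epsilon)$ replaced by $\theta$) and give $\mathbb{E}|A_{t_{1}}^{\theta,\epsilon}(y_{1})-A_{t_{2}}^{\theta,\epsilon}(y_{2})|^{n}\leq Ke^{n\beta_{1}(|y_{1}|\vee|y_{2}|)}(|y_{1}-y_{2}|+|t_{1}-t_{2}|)^{\alpha n/4}$. For the drift part $D^{\theta,\epsilon}$, for the spatial increment I would write
\[
D_{t}^{\theta,\epsilon}(y_{1})-D_{t}^{\theta,\epsilon}(y_{2})=\sum_{j}\int_{0}^{t}\int_{\mathbb{R}}P_{1}\,G_{s}^{\epsilon,j}(x,v_{s}^{\theta,\epsilon})k_{s}^{\epsilon,j}\,dx\,ds
\]
with $P_{1}$ as in (\ref{PP1}), and apply Cauchy--Schwarz first in $j$, then in $(s,x)$, to bound the $n$-th moment by
\[
K\Bigl(\textstyle\int_{0}^{1}\|k_{s}^{\epsilon}\|_{\ell_{2}}^{2}ds\Bigr)^{n/2}\mathbb{E}\Bigl(\int_{0}^{t}\int_{\mathbb{R}}P_{1}^{2}e^{2\beta_{1}|x|}dx\,\textstyle\int_{\mathbb{R}}(1+v_{s}^{\theta,\epsilon}(x)^{2}+e^{2\beta_{0}|x|})e^{-2\beta_{1}|x|}dx\,ds\Bigr)^{n/2},
\]
which by (\ref{P1}) and Step~1 is at most $Ke^{n\beta_{1}(|y_{1}|\vee|y_{2}|)}|y_{1}-y_{2}|^{\alpha n/2}$. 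The time increment for $D^{\theta,\epsilon}$ is handled analogously using (\ref{P2}) and (\ref{p2}). Combining the two estimates gives (\ref{kolmogorov}) for $v^{\theta,\epsilon}$. Finally, the uniform one-point bound $\sup_{\epsilon}\mathbb{E}|v_{t_{0}}^{\theta,\epsilon}(y_{0})|^{n}<\infty$ follows along the same lines (taking $t_{1}=t_{2}=t_{0}$, $y_{2}=y_{0}$ and a dummy $y_{1}$ in the calculation above, or directly from Step~1), so Lemma \ref{lemma2} applies and yields tightness of $\{v^{\theta,\epsilon}\}$ in $\mathcal{C}([0,1];\mathbb{B}_{\beta})$.

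The only delicate step is the treatment of the controlled drift: one must be careful to perform Cauchy--Schwarz in the right order (summing over $j$ first, then integrating over $(s,x)$) so as to exploit Parseval on $G_{s}^{\epsilon,j}$ and the constraint $\|k^{\epsilon}\|_{L^{2}([0,1];\ell_{2})}^{2}\leq N$ simultaneously; once this is set up, all subsequent calculations reduce to the estimates already used in the proof of Lemma \ref{lemm2}.
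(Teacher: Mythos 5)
Your proposal is correct and follows essentially the same route as the paper: verify the hypotheses of Lemma \ref{lemma2} by (i) extending the $\mathcal{X}_{0}$-moment bound of Lemma \ref{lem1} to $v^{\theta,\epsilon}$, (ii) treating the stochastic term exactly as in Lemma \ref{lemm2}, and (iii) handling the controlled drift by Cauchy--Schwarz in $j$ (Parseval reducing $\sum_j|G^{\epsilon,j}_s|^2$ to (\ref{MDP con2})) and then in $s$ to use $\int_0^1\|k_s^{\epsilon}\|_{\ell_2}^2ds\le N$, together with (\ref{P1}), (\ref{P2}), (\ref{p2}). The only difference is that you spell out the Gronwall argument for the extra drift contribution in the moment estimate, which the paper simply asserts as "following the same steps as in Lemma \ref{lem1}."
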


\begin{proof}
  According to Lemma \ref{lemma2}, to achieve the tightness for $\{v_{t}^{\theta, \epsilon}\}$, it is sufficient to show
  estimate (\ref{kolmogorov}) for $v_{t}^{\theta,\epsilon}$ and verify that $\sup_{\epsilon>0} \mathbb{E}\left|v_{t_{0}}^{\theta,\epsilon}(y_{0})\right|^{n}<\infty$ for some $(t_{0},y_{0})\in [0,1]\times \mathbb{R}$. Following the same steps as in the proof of lemma \ref{lem1}, we have
  \begin{equation}\label{MM}
  \tilde{M}:= \sup_{0<\epsilon<1}\mathbb{E}\sup_{0\leq s\leq 1}\left(\int_{\mathbb{R}} \left|v_{s}^{\theta,\epsilon}(x)\right|^{2}e^{-2\beta_{1}|x|}dx\right)^{n}<\infty.
  \end{equation}
  Notice that estimate (\ref{kolmogorov}) can be attained for the first term on the right hand side of (\ref{solutionSPDE}) by exactly the same calculations done in Lemma \ref{lemm2} with the use of $\tilde{M}$ given in (\ref{MM}) instead of $\bar{M}$ of Lemma \ref{lem1}. Thus, we focus on finding estimate (\ref{kolmogorov}) for
  \begin{equation*}
  \tilde{v}_{t}^{\theta,\epsilon}(y):= \sum_{j} \int_{0}^{t}\int_{\mathbb{R}} p_{t-s}(y-x)G_{j}^{\epsilon,j}(x, v_{s}^{\theta,\epsilon}(x))k_{s}^{\epsilon,j}dxds.
  \end{equation*}

 Using the same method used in the proof of Lemma \ref{lemm2}, we begin by fixing $t\in [0,1]$ and assuming $y_{1},y_{2}$ to be any real numbers such that $|y_{i}|\leq m$ for $i=1,2$ and $m\in \mathbb{N}$. Recall
\begin{equation*}
P_{1}:= p_{t-s}(y_{1}-x)-p_{t-s}(y_{2}-x).
\end{equation*}
With the help of Cauchy-Schwartz inequality, (\ref{MDP con2}), and our result (\ref{P1}), we obtain the estimates below,
\begin{eqnarray*}
&&\mathbb{E}\left|\tilde{v}_{t}^{\theta,\epsilon}(y_{1})-\tilde{v}_{t}^{\theta,\epsilon}(y_{2})\right|^{n}\\
&=& \mathbb{E}\left|\int_{0}^{t}\int_{\mathbb{R}} P_{1}\sum_{j}G_{s}^{\epsilon,j}(x,v_{s}^{\theta,\epsilon}(x))k_{s}^{\epsilon,j}dxds\right|^{n}\\
&\leq& \mathbb{E}\left|\int_{0}^{t} \int_{\mathbb{R}} P_{1}\left(\sum_{j}G_{s}^{\epsilon,j}(x,v_{s}^{\theta,\epsilon}(x))^{2}\right)^{\frac{1}{2}}\|k_{s}^{\epsilon}\|_{\ell_{2}}dxds\right|^{n}\\
&\leq& \mathbb{E}\left|\left(\int_{0}^{t} \left(\int_{\mathbb{R}} P_{1}\sqrt{K\left(1+v_{s}^{\theta,\epsilon}(x)^{2}+e^{2\beta_{0}|x|}\right)}dx\right)^{2}ds\right)^{\frac{1}{2}}\left(\int_{0}^{t}\|k_{s}
^{\epsilon}\|_{\ell_{2}}^{2}ds\right)^{\frac{1}{2}}\right|^{n}\\
&\leq& \mathbb{E} \left|\int_{0}^{t}\left(\int_{\mathbb{R}}P_{1}\sqrt{K\left(1+v_{s}^{\theta,\epsilon}(x)^{2}+e^{2\beta_{0}|x|}\right)}dx\right)^{2}dx\right|^
{\frac{n}{2}}N^{\frac{n}{2}} \\
&\leq& K e^{n\beta_{1}(|y_{1}|\vee |y_{2}|)} |y_{1}-y_{2}|^{\frac{\alpha n}{2}},
\end{eqnarray*}
where $N>0$ is the constant given by (\ref{N}). Furthermore, the case for $0\leq t_{1}<t_{2}\leq 1$ arbitrary and $y\in \mathbb{R}$ fixed can be given by
\begin{eqnarray*}
\mathbb{E}\left|\tilde{v}_{t_{1}}^{\theta,\epsilon}(y)-\tilde{v}_{t_{2}}^{\theta,\epsilon}(y)\right|^{n}&\leq& K\mathbb{E} \left|\int_{t_{1}}^{t_{2}} \int_{\mathbb{R}} P_{2} \sum_{j} G_{s}^{\epsilon,j}(x,v_{s}(x))k_{s}^{\epsilon,j}dxds\right|^{n}\\
&&\hspace{.1cm}  + K\mathbb{E} \left|\int_{0}^{t_{1}} \int_{\mathbb{R}} p_{t_{2}-s}^{2}(y-x) \sum_{j} G_{s}^{\epsilon,j}(x,v_{s}(x))k_{s}^{\epsilon,j}dxds\right|^{n}\\
&\leq& K e^{n\beta_{1}|y|} |t_{1}-t_{2}|^{\frac{n\alpha}{4}},
\end{eqnarray*}
where,
\begin{equation*}
P_{2}:= p_{t_{1}-s}(y-x)-p_{t_{2}-s}(y-x).
\end{equation*}
\end{proof}
Thus, $\{v_{t}^{\theta,\epsilon}\}$ is tight and for the assumption of Theorem 6 of \cite{Bud} to be satisfied we let $\theta = 0$ for its first part and $\theta= a(\epsilon)$ for the second part and apply the Prohorov Theorem and so by their Theorem 6, our Theorem \ref{them1} can be deduced.

\section{Moderate Deviations for SBM and FVP}
We devote this section to the proofs of Theorems \ref{them2} and \ref{them3}. Recall
\begin{equation}\label{w}
\omega_{t}^{\epsilon}(dy):= \frac{a(\epsilon)}{\sqrt{\epsilon}} \left(\mu_{t}^{\epsilon}(dy)-\mu_{t}^0(dy)\right),
\end{equation}
 where in the case of SBM, $u_{t}^{\epsilon}(y):= \int_{0}^{y}\mu_{t}^{\epsilon}(dx)$. Then based on (\ref{centered}), we can write $v_{t}^{\epsilon}(y):= \int_{0}^{y}\omega_{t}^{\epsilon}(dx)$. Similarly, for FVP we have $u_{t}^{\epsilon}(y):=\int_{-\infty}^{y}\mu_{t}^{\epsilon}(dx)$ which gives $v_{t}^{\epsilon}(y):=\int_{-\infty}^{y}\omega_{t}^{\epsilon}(dx)$. Analogous to Lemma 6 of \cite{me} we have that for the set of functions with finite variations, $\mathcal{A}$, the map $\xi: \mathbb{B}_{\beta}\cap \mathcal{A}\rightarrow \mathcal{M}^S_{\beta}(\mathbb{R})$ given as $\xi(u)(B)= \int 1_{B}(y)du(y)$ for all $B\in \mathbb{B}(\mathbb{R})$ is continuous. Therefore, map
  $\eta: \mathcal{C}\left([0,1]; \mathbb{B}_{\beta}\right)\rightarrow \mathcal{C}\left([0,1]; \mathcal{M}^S_{\beta}(\mathbb{R})\right)$ defined as $\eta(v)_{t}=\xi(v_{t})$ is also continuous. Since SBM and FVP can be written as $\omega_{t}^{\epsilon}(dy)= \eta(v^\epsilon)_{t}([0,y])$ and $\omega_{t}^{\epsilon}(dy)= \eta(v^\epsilon)_{t}((-\infty, y])$ respectively, then in both cases $\omega_{t}^{\epsilon}(y)$ is a continuous function of $v_{t}^{\epsilon}$. Based on our LDP result for $v_{t}^{\epsilon}$ given in Theorem \ref{them1}, we can conclude by the contraction principle that $\{\omega_{t}^{\epsilon}\}$ also satisfies the LDP for both models.

 Our remaining task is to identify an explicit representation of the models' MDP rate functions. According to the contraction principle, rate functions for SBM and FVP are given by $\inf\left\{I(u):u\in \eta^{-1}(\omega)\right\}$. Since $\eta$ is injective, we then aim to find the rate functions following the form given by (\ref{rate}).

As for SBM, formulation (\ref{SBM}) satisfies the general SPDE (\ref{SPDE}) with the following properties,
\begin{equation*}
   U= \mathbb{R}, \hspace{.3cm}\lambda(da)=da, \hspace{.3cm}G(a,y,u)= 1_{0\leq a\leq u}+ 1_{u\leq a \leq 0},
  \end{equation*}
then using the controlled PDE (\ref{controlled}) we have,
\begin{eqnarray*}
&&<\omega_{t},f>= <\partial_{x}v_{t},f> = -<v_{t},f'>\\
&=& -\int_{0}^{t}\int_{0}^{\infty}\int_{0}^{u_{s}^{0}(y)}h_{s}(a)f'(y)dadyds - \int_{0}^{t}\int_{-\infty}^{0}\int_{u_{s}^{0}(y)}^{0}h_{s}(a)f'(y)dadyds \\
&&\hspace{.2cm} - \int_{0}^{t}<\frac{1}{2}\Delta v_{s},f'>ds\\
&=& \int_{0}^{t}\int_{0}^{\infty}h_{s}(a)f((u_{s}^{0})^{-1}(a))dads- \int_{0}^{t}\int_{-\infty}^{0}h_{s}(a)f((u_{s}^{0})^{-1}(a))dads\\
&&\hspace{.2cm}+ \int_{0}^{t}<\frac{1}{2}\omega_{s},\Delta f>ds\\
&=& \int_{0}^{t}\int_{0}^{\infty}h_{s}(u_{s}^{0}(y))f(y)du_{s}^{0}(y)ds-\int_{0}^{t}\int_{-\infty}^{0}h_{s}(u_{s}^{0}(y))f(y)
du_{s}^{0}(y)ds \\
&&\hspace{.2cm} +\int_{0}^{t} <\frac{1}{2}\Delta^{*} \omega_{s},f>ds\\
&=& \int_{0}^{t}<h_{s}(u_{s}^{0})sgn(.)\mu_{s}^{0},f>ds+\frac{1}{2}\int_{0}^{t}<\Delta^{*}\omega_{s},f>ds.
\end{eqnarray*}
Thus,
\begin{equation*}
h_{t}(u_{t}^{0}(y))sgn(y)= \frac{d\left(\dot{\omega}_{t}-\frac{1}{2}\Delta^{*}\omega_{t}\right)}{d\mu_{t}^{0}}(y).
\end{equation*}
Notice that
\begin{equation*}
\int_{\mathbb{R}} |h_{t}(a)|^{2}da = \int_{\mathbb{R}} |h_{t}(u_{t}^{0}(y))|^{2}du_{t}^{0}(y)= \int_{\mathbb{R}} |h_{t}(u_{t}^{0}(y))|^{2}d\mu^0_{t}(y).
\end{equation*}
Letting the right hand side of (\ref{rate4sbm}) be denoted as $I_{0}(\mu)$, if $I(\mu)<\infty$ then $I(\mu)$ given in (\ref{rate}) with $U=\mathbb{R}$ is equal to $I_{0}(\mu)$. For the case $I_{0}(\mu)<\infty$ we can reverse the above calculations to obtain $I_{0}(\mu)= I(\mu)$.

Similarly for FVP, since FVP satisfies the general SPDE (\ref{SPDE}) with
\begin{equation*}
   U= [0,1],\hspace{.3cm} \lambda(da)=da, \hspace{.3cm} G(a,y,u)= 1_{a<u}-u,
  \end{equation*}
then
\begin{eqnarray*}
&&<\omega_{t},f>= -<v_{t},f'>\\
&=& -\int_{0}^{t}\int_{\mathbb{R}}\int_{0}^{u_{s}^{0}(y)} h_{s}(a)f'(y)dadyds \\
&&\hspace{.2cm} + \int_{0}^{t}\int_{\mathbb{R}}\int_{0}^{1}u_{s}^{0}(y)h_{s}(a)f'(y)dadyds
 - \int_{0}^{t}<\frac{1}{2}\Delta v_{s}(y),f'>ds\\
&=& \int_{0}^{t}<h_{s}(u_{s}^{0})\mu_{s}^{0},f>ds -\int_{0}^{t}<\int_{0}^{1}h_{s}(a)da\mu_{s}^{0},f>ds \\
&&\hspace{.2cm} +\int_{0}^{t}<\frac{1}{2}\Delta^{*}\omega_{s},f>ds.
\end{eqnarray*}
Thus,
\begin{equation*}
\dot{\omega}_{t}-\frac{1}{2}\Delta^*\omega_{t}= h_{t}(u_{t}^{0}(y))\mu_{t}^{0}- \int_{0}^{1}h_{t}(a)da \mu_{t}^{0}.
\end{equation*}
Our goal is to find the infimum of $\int_{0}^{1}\left|h_{s}(a)\right|^{2}da$ over $h_{s}(a)$ satisfying (\ref{controlled}). We note that if $h$ satisfies (\ref{controlled}) then $g_{s}(a):=h_{s}(a)-\int_{0}^{1}h_{s}(a)da$ also satisfies the same equation. It is well-known that the second moment is minimized when it is centralized. Therefore, we replace $h_{s}(a)$ by $g_{s}(a)$ in the definition of the rate function, and obtain it as,
\begin{equation*}
\int_{0}^{1}|g_{s}(a)|^{2}da= \int_{0}^{1}\left|\frac{d\left(\dot{\omega}_{t}-\frac{1}{2}\Delta^{*}\omega_{t}\right)}{d\mu_{t}^{0}}(y)
\right|^{2}d\mu_{t}^{0}(y),
\end{equation*}
in (\ref{rate}) to arrive at (\ref{rate4fvp}) for the case $I(v)<\infty$ and based on a similar argument as in the case of SBM we obtain (\ref{rate4fvp}). Thus, MDP is proved for the two models.

\end{document}